\numberwithin{equation}{section}
\newtheorem{theorem}{Theorem}[section]
\newtheorem{corollary}[theorem]{Corollary}
\newtheorem{proposition}[theorem]{Proposition}
\newtheorem{example}[theorem]{Example}
\newtheorem{definition}[theorem]{Definition}
\begin{document}
	
	\title{Random walks and the "Euclidean" association scheme in finite vector spaces}
	
	\author[C. Brittenham]{Charles Brittenham}
	\address{C. Brittenham, Department of Mathematics, 
		Colgate University,
		13 Oak Drive, Hamilton, NY 13346}
	\email{charles.brittenham@gmail.com}
	
	\author[J. Pakianathan]{Jonathan Pakianathan*}
	\address{J. Pakianathan, Department of Mathematics, 
		University of Rochester,
		500 Joseph C. Wilson Blvd., Rochester, NY 14627}
	\email{jonathan.pakianathan@rochester.edu}
	
	\date{}
	
	\thanks{*Corresponding Author: Email jonathan.pakianathan@rochester.edu}

	\begin{abstract}
	
	In this paper, we provide an application to the random distance-$t$ walk in finite planes and derive asymptotic formulas (as $q \to \infty$) for the probability of return to start point after $\ell$ steps	based on the "vertical" equidistribution of Kloosterman sums established by N. Katz. This work relies on a "Euclidean" association scheme studied in prior work of W.M.Kwok, E. Bannai, O. Shimabukuro and H. Tanaka.  We also provide a self-contained computation of the P-matrix and intersection numbers of this scheme for convenience in our application as well as a more explicit form for the intersection numbers in the planar case.	
		\noindent
		{\it Keywords: Association schemes, Kloosterman sums, Random Walks.}
		
		\noindent
		2020 {\it Mathematics Subject Classification:} Primary: 05E30, 05C90. Secondary: 11L05, 11T23.
		
	\end{abstract}
	
	\maketitle

	\setcounter{tocdepth}{1}
	\tableofcontents
	\setcounter{tocdepth}{3}

	\section{Introduction}
	
	In this paper we study the unit random walk in finite vector spaces. We also provide a discussion of a related association scheme that was the subject of the first author's thesis work~\cite{CB} that we latter were informed was previously studied by W.M. Kwok, E. Bannai, O. Shimabukuro and H. Tanaka in~\cite{BST}, \cite{Kwok}. In \cite{Kwok}, the P-matrix and intersection numbers of this scheme were worked out in terms of the character table of the "Euclidean" group over finite fields but had some errors which were latter corrected in \cite{BST} where a connection to Kloosterman sums was also mentioned, including Kloosterman's original bound. In this paper, we recap these calculations first to connect them more explicitly to our applications which use some deeper distributional data about Kloosterman sums provided by equidistribution results from number theory. (Along the way we also provide an explicit calculation of the planar intersection numbers implicitly described in~\cite{Kwok}.)
	
	The books~\cite{BH}, \cite{GC}, \cite{T} are good references  for the spectral graph theory concepts used in this paper. The book~\cite{BH} is a good reference for association schemes. 
	Let $\mathbb{F}_q$ be a finite field of odd prime power order $q$ and $V=\mathbb{F}_q^d$ be the standard $d$-dimensional vector space over $\mathbb{F}_q$.  Note that all finite vector spaces of odd characteristic are of this form.
	
	Equip $V$ with the standard bilinear dot product associated to the nondegenerate quadratic form $Q(v)=v_1^2 + \dots + v_d^2$. The "distance" between two points $v$ and $w$ in $V$ as determined by this quadratic form is given by $Q(v-w)$ which is the standard "Euclidean" distance formula without the square root. The quadratic space $(V,Q)$ will be refered to as $d$-dimensional Euclidean space over $\mathbb{F}_q$ in this paper. Note that it is known that there are only two nondegenerate quadratic forms up to isometry on any finite vector space (\cite{S}), besides the one derived from the standard dot product that we are using, the other one is a "Lorentzian" quadratic form with 
	$Q_{Lorentz}(v)=v_1^2 + \dots + v_{d-1}^2 + \xi v_d^2$ where $\xi$ is any fixed nonsquare element in $\mathbb{F}_q$. Most of what we discuss will also hold for this other quadratic form but we will stick to the dot product in this paper for brevity.
	
	For every $t \in \mathbb{F}_q$, we may define the distance-t graph whose vertex set is $V$ and where there is an edge between $v, w \in V$ if and only if $Q(v-w)=t$ and $v \neq w$. This regular graph has a $q^d \times q^d$ adjacency matrix $\mathbb{A}_t$ with respect to some fixed ordering of the vertices of $V$ and common vertex degree $|S_t|$ where $S_t=\{ v \in V\setminus\{0\} | Q(v)=t \}$ is the ''sphere of radius $t$'' and has order $q^{d-1}(1+o(1))$ when $t \neq 0$. (The exact order is known but here for brevity we collect secondary terms in $o(1)$ which tends to $0$ as $q \to \infty$). In fact this graph is a Cayley graph with connection set $S_t$ and hence is in fact vertex-transitive (i.e., there is a graph automorphism taking any vertex to any other vertex).
	The corresponding Markov chain (see \cite{MM} for basic Markov chain terminology) on this graph is called the distance-t random walk in $V$ with the case $t=1$ called the unit random walk in $V$. It corresponds to a situation where at each step, the current state evolves by taking a distance $t$ step with each such step equally likely. The transition matrix for this Markov chain is $T_t =\frac{1}{|S_t|}\mathbb{A}_t$.
	
	The matrices $\{ \mathbb{A}_t | t \in \mathbb{F}_q \}$ determine a symmetric association scheme with corresponding real and complex Bose-Messner algebras. What this means is that we have identities of the form $$\mathbb{A}_i \mathbb{A}_j = \sum_{k \in \mathbb{F}_q} p_{i,j}^k \mathbb{A}_k$$ where the intersection numbers $p_{i,j}^k$ correspond to the number of ways that a given ''line segment" of length $k$ can be completed to a triangle of side lengths $i,j$ and $k$. We call this particular scheme the Euclidean association scheme for the finite field $\mathbb{F}_q$ and recompute certain change of basis matrices $P$ and $Q$ (between a geometric and a spectral basis of the Bose-Messner algebra) which are generally important in the theory of such schemes. We also compute this association scheme's intersection numbers. (In~\cite{Kwok}, the higher dimensional numbers were computed in terms of the planar case which we compute explicitly here.) This first part of the paper's results appear in the first author's thesis as well as in~\cite{Kwok} (some errors in \cite{Kwok} were later corrected in \cite{BST} and the connection to Kloosterman sums made more explicit, though these were also already described earlier in~\cite{IR}). 
	
	The eigenvalues of $\mathbb{A}_t$ are related to twisted Kloosterman sums:
	$$
	\tilde{K}_d(a,b) = \sum_{x \in \mathbb{F}_q - \{0\}} \binom{x}{q}^d \chi(ax + \frac{b}{x})
	$$
	where $\binom{x}{q}$ is the Legendre symbol and $\chi$ is a fixed nontrivial additive character given by $\chi(x) = e^{\frac{2 \pi i Tr(x)}{p}}$, Here, $q=p^{\ell}$ with $p$ an odd prime and $Tr$ is the Galois trace from $\mathbb{F}_q$ to $\mathbb{F}_p$.
	
	In even dimensions $d$, the twisting by the Legendre symbol disappears and the eigenvalues are related to Kloosterman sums:
	$$
	K(a,b) = \sum_{x \in \mathbb{F}_q - \{0\}} \chi(ax + \frac{b}{x}).
	$$

	In general the $K(a,b)$ are real algebraic integers, contained in $\mathbb{Q}[e^{\frac{2 \pi i}{p}}]$,  the $p$th cyclotomic field.
	
	The identities $K(0,0)=q-1, K(a,b)=K(b,a), K(1,0)=-1$ are trivial to verify via character orthogonality and coordinate changes, as is the identity $K(a,b)=K(1,ab)$ when $a \neq 0$. By work of Kloosterman and others (\cite{K}, \cite{AW}), it is known $|K(1,\alpha)| \leq 2\sqrt{q}$ when $\alpha \neq 0$ and $q=p$ is an odd prime. One typically writes $K(1,\alpha)=2\sqrt{q} \cos(\theta_{\alpha,q})$ where $\theta_{\alpha,q} \in [0, \pi]$. These numbers occur as spectra of various natural Cayley graphs and hence in many combinatorial applications. (\cite{YD}, \cite{RB}, \cite{Hart})
	
	Nick Katz~\cite{NK} proved the deep result on "vertical equidistribution of Kloosterman sums" (\cite{XP1}) which states that for any $[a,b] \subseteq [0, \pi]$, the proportion of $\{\theta_{\alpha, q}, 1 \leq \alpha \leq q-1 \}$ that lie in $[a,b]$ approaches $ \frac{2}{\pi}\int_a^b sin^2(\theta) d\theta$ as $q \to \infty$. This result was obtained by estimating the $\ell$th moment of the Kloosterman numbers 
$M_{q,\ell}= \sum_{1 \leq \alpha \leq q-1} |K(1,\alpha)|^{\ell}$ for all positive integers $\ell$ reasonably. Useful closed form formulas for these moments are only known for a finite number of $\ell$ (\cite{XP2}).

	In this paper, we calculate $R_{q,\ell,t}$ (see Theorem~\ref{thm: Returnwalk}), the probability that you return to the vertex you started from after $\ell$-steps in the random distance $t$-walk, in terms of these moments of Kloosterman sums.  We show that this probability is independent of your starting state/vertex and is given by:
	\begin{theorem}[Probability of Return in the Distance $t$ Random Walk in $\mathbb{F}_q$-planes]
Let $q$ be an odd prime, $q=3 \text{ mod } 4$. Let $R_{q,\ell,t}$ be the probability that you return to the same vertex after $\ell$ steps in the distance-$t$ walk where $t \neq 0$.
Then $R_{q,\ell, t}=R_{q,\ell}$ is independent of $t \neq 0$ and initial state. We have 
$$
R_{q,\ell,t}=R_{q,\ell}=\frac{1}{q^2}(1 + \frac{(-1)^{\ell}}{(q+1)^{\ell-1}} M_{q,\ell})
$$
Furthermore, as $q \to \infty$ we have: 
$$
R_{q, 2\ell} = \frac{1}{q^2} + \frac{q^{\ell-1}}{(\ell+1)(q+1)^{2\ell-1}}\binom{2\ell}{\ell} (1 + o(1)) = \frac{1}{q^2} + \frac{1}{q^{\ell}(\ell+1)}\binom{2\ell}{\ell}(1+o(1))
$$
and
$$
R_{q, 2\ell+1}=\frac{1}{q^2}(1 - \frac{1}{(q+1)^{2\ell}}o(q^{\ell+1.5}))=\frac{1}{q^2}(1+o(q^{1.5-\ell}))
$$

\end{theorem}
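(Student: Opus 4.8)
The plan is to realize $R_{q,\ell,t}$ as a normalized trace, diagonalize $\mathbb{A}_t$ explicitly in terms of Kloosterman sums, and then read the asymptotics off Katz's vertical equidistribution. First I would reduce to a trace: the distance-$t$ graph is the Cayley graph $\mathrm{Cay}(\mathbb{F}_q^2,S_t)$, hence vertex-transitive, so the return probability $(T_t^{\ell})_{v,v}$ is independent of $v$ and therefore equals the average of the diagonal entries of $T_t^{\ell}$, namely $R_{q,\ell,t}=\frac{1}{q^2}\mathrm{tr}(T_t^{\ell})=\frac{1}{q^2|S_t|^{\ell}}\mathrm{tr}(\mathbb{A}_t^{\ell})$; this already gives independence of the initial state. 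Since $q\equiv 3\pmod 4$, the form $v_1^2+v_2^2$ is anisotropic, so $|S_t|=q+1$ for every $t\neq 0$.

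Next I would diagonalize $\mathbb{A}_t$. As the adjacency matrix of a Cayley graph on the abelian group $\mathbb{F}_q^2$, its eigenvalues are the character sums $\lambda_a=\sum_{v\in S_t}\chi(a\cdot v)$, $a\in\mathbb{F}_q^2$ --- equivalently, the entries of the Euclidean scheme's $P$-matrix. Completing the square in each coordinate and evaluating the quadratic Gauss sums that appear --- using $g^{2}=\binom{-1}{q}q=-q$, which is where $q\equiv 3\pmod 4$ enters --- yields $\lambda_0=q+1$ and $\lambda_a=-K(1,tQ(a)/4)$ for $a\neq 0$. By anisotropy the map $a\mapsto Q(a)$ takes each value of $\mathbb{F}_q^{\times}$ exactly $q+1$ times on $\mathbb{F}_q^2\setminus\{0\}$, hence so does $\beta=tQ(a)/4$, and therefore
\[
\mathrm{tr}(\mathbb{A}_t^{\ell})=(q+1)^{\ell}+(-1)^{\ell}(q+1)\sum_{\beta\in\mathbb{F}_q^{\times}}K(1,\beta)^{\ell},
\]
which in particular does not depend on $t\neq 0$.

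Dividing by $q^2(q+1)^{\ell}$ gives the exact formula $R_{q,\ell}=\frac{1}{q^2}\bigl(1+\frac{(-1)^{\ell}}{(q+1)^{\ell-1}}\sum_{\beta\in\mathbb{F}_q^{\times}}K(1,\beta)^{\ell}\bigr)$; since each $K(1,\beta)$ is real, the power sum equals $M_{q,\ell}$ whenever $\ell$ is even. For the asymptotics I would write $K(1,\beta)=2\sqrt{q}\cos\theta_{\beta,q}$ and invoke Katz: the measures $\frac{1}{q-1}\sum_{\beta}\delta_{\theta_{\beta,q}}$ are all supported on the fixed compact $[0,\pi]$ and, by the interval statement quoted above, converge weakly to $\frac{2}{\pi}\sin^2\theta\,d\theta$, so pairing with the continuous function $\cos^{\ell}\theta$ gives
\[
\sum_{\beta\in\mathbb{F}_q^{\times}}K(1,\beta)^{\ell}=(2\sqrt{q})^{\ell}(q-1)\Bigl(\tfrac{2}{\pi}\int_0^{\pi}\cos^{\ell}\theta\,\sin^2\theta\,d\theta+o(1)\Bigr).
\]
When $\ell=2\ell'+1$ the integral vanishes (substitute $\theta\mapsto\pi-\theta$), so the sum is $o(q^{\ell'+3/2})$; substituting this and $(q+1)^{2\ell'}=q^{2\ell'}(1+o(1))$ into the exact formula produces the stated estimate for $R_{q,2\ell'+1}$. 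When $\ell=2\ell'$, a Wallis/beta-integral evaluation gives $\frac{2}{\pi}\int_0^{\pi}\cos^{2\ell'}\theta\,\sin^2\theta\,d\theta=\frac{1}{\ell'+1}\binom{2\ell'}{\ell'}4^{-\ell'}$, hence $M_{q,2\ell'}=q^{\ell'}(q-1)\frac{1}{\ell'+1}\binom{2\ell'}{\ell'}(1+o(1))$; inserting this and absorbing $\frac{q-1}{q}=1+o(1)$ and $(q+1)^{2\ell'-1}=q^{2\ell'-1}(1+o(1))$ reproduces both displayed forms of $R_{q,2\ell'}$.

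The deep ingredient --- Katz's equidistribution --- is assumed, so the real work splits into two careful but elementary parts. The first is the Gauss-sum diagonalization of $\mathbb{A}_t$: it is routine but sign-sensitive, and it is exactly the hypothesis $q\equiv 3\pmod 4$, via $g^2=-q$ and the anisotropy of $v_1^2+v_2^2$, that produces the single ``trivial'' eigenvalue $q+1$ alongside all of $\{-K(1,\beta):\beta\in\mathbb{F}_q^{\times}\}$ with equal multiplicities $q+1$ (if this already appears in the $P$-matrix section it can simply be cited). The second, and the real obstacle, is controlling the error in passing from equidistribution to moments: for even $\ell$ the limiting moment is a nonzero constant, so the error is genuinely multiplicative and one gets a clean $(1+o(1))$, whereas for odd $\ell$ the main term is zero and the size of $\sum_{\beta}K(1,\beta)^{\ell}$ is governed purely by the rate of equidistribution --- which is precisely why the odd case only yields the bound $o(q^{\ell'+3/2})$ rather than a power saving. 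I would also note that for odd $\ell$ the symbol $M_{q,\ell}$ appearing in the statement must be read as the signed power sum $\sum_{\beta}K(1,\beta)^{\ell}$, not $\sum_{\beta}|K(1,\beta)|^{\ell}$, since only the former exhibits the cancellation the asymptotic reflects.
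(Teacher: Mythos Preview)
Your proposal is correct and follows essentially the same route as the paper: reduce $R_{q,\ell,t}$ to $\frac{1}{q^2|S_t|^{\ell}}\mathrm{tr}(\mathbb{A}_t^{\ell})$ by vertex-transitivity, read off the spectrum $\{(q+1)^{(1)},\,-K(1,\beta)^{(q+1)}:\beta\in\mathbb{F}_q^{\times}\}$ from the $P$-matrix computation (your Gauss-sum diagonalization is exactly Theorem~\ref{thm: Kloostermansareeigenvalues} specialized to $d=2$, $q\equiv 3\pmod 4$), and then insert the moment asymptotics coming from Katz, which the paper records in Proposition~\ref{pro:closedwalks} via the same Sato--Tate integral you evaluate. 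Your closing remark is also on target: the paper's working definition of $M_{q,\ell}$ is the signed power sum $\sum_{\beta}K(1,\beta)^{\ell}$, not the absolute-value version mentioned in the introduction, so no adjustment is needed.
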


	Note the first $\frac{1}{q^2}$ term in these asymptotic formulas is what one would expect if the location after $\ell$ steps were equally likely to be anywhere in the plane and the second term represents an arithmetic bias against that happening. In particular we see that for $\ell \geq 5$, we have $\mathbb{R}_{q,\ell}$ is $\frac{1}{q^2}(1+o(1))$ but for smaller $\ell$ arithmetic bias is significant and so we need at least $5$ steps to achieve uniformity. Note $R_{q,1}=0, R_{q,2}=\frac{1}{q}(1+o(1)), R_{q,4}=\frac{3}{q^2}(1+o(1))$ are all "arithmetically biased". Similar results hold for $q=1 \text{ mod } 4$ and can be seen in the paper. 
	
	Though the main terms in these formulas can be derived using just the spectral gap when $\ell \geq 5$, the explicit nature of the second order term requires, and is equivalent to, the deeper vertical equidistribution results.

	\section{Association scheme definitions}
	
	See \cite{BH} for a discussion of the essentials of the theory of association schemes. There are various equivalent definitions of association schemes, for us the following is most convenient:
	
	\begin{definition}[Association Scheme]
		An association scheme is a set $X$ equipped with a surjective "distance function" $d: X \to \Delta$ with distance set $\Delta$ which contains a formal zero element $\{ \bar{0} \}$ such that: \\
		(1) $d(x,y)=\bar{0}$ if and only if $x=y$.\\
		(2) $d(x,y)=d(y,x)$ all $x, y \in X$.\\
		and\\
		(3) Given $x,y \in X$, $k, i, j \in \Delta$ with $d(x,y)=k$, the number $p_{i,j}^k(x,y)$ of $z \in X$ such that $d(x,z)=i, d(z,y)=j$ only depends on $i, j, k$ and not $x, y$. Thus 
		$p_{i,j}^k(x,y)=p_{i,j}^k$ and these are called the "intersection numbers" of the association scheme.
	\end{definition}
	
	The best way to think of this last condition is to think of $x,y$ giving a segment of length $k$ and then noting that (3) states that the number of ways to complete this segment into a "triangle" only depends 
	on the side lengths of the triangle and not the endpoints of the segment itself.
	
	Though it is nice to think of an association scheme "geometrically" do note that we do not require the "distance" to satisfy the triangle inequality nor for the set $\Delta$ to be numerical. The elements of $\Delta$ can be any objects such as colors etc.
	
	For any $x \in X$ and $j \in \Delta$, the sphere of radius $j$ about $x$ is denoted $$S_j(x)=\{ y \in X | d(x,y)=j \}.$$ Note in any finite association scheme, setting $x=y$ in $(3)$ we see that $$p_{i,j}^{\bar{0}}=\delta_{i,j} |S_j(x)|$$ is independent of $x$. Here $\delta_{i,j}$ is the Kronecker Delta function which is $1$ when $i=j$ and $0$ otherwise. Furthermore by (2), we may reverse the role of $x$ and $y$ and so conclude that $$p_{i,j}^k = p_{j,i}^k$$ for all $i,j,k \in \Delta$.
	
	Given an association scheme $(X,d)$, we define the distance-$t$-graph for any $t \in \Delta$ as the graph on vertex set $X$ and where two vertices $x, y \in X$ are adjacent if and only if $d(x,y)=t$. 
	As $|S_t(x)|$ is independent of $x$, the distance-$t$-graph is $|S_t|$-regular.
	
	In the case $X$ is finite, we denote the adjacency matrix of the graph (with respect to some linear ordering of $X$) as $\mathbb{A}_t$. Thus $\mathbb{A}_{\bar{0}} = I$ and 
	$\sum_{j \in \Delta} \mathbb{A}_j = \mathbb{J}$ where $\mathbb{J}$ is the all $1$ matrix. It is easy to check that condition (2) is equivalent to the $\mathbb{A}_t$ being symmetric matrices and 
	condition (3) is equivalent to 
	$$
	\mathbb{A}_i \mathbb{A}_j = \sum_{k} p_{i,j}^k \mathbb{A}_k.
	$$
	As $p_{i,j}^k=p_{j,i}^k$, this says the matrices $\{ \mathbb{A}_i | i \in \Delta \}$ all commute with each other. Note as we require $d: X \times X \to \Delta$ to be surjective, no $\mathbb{A}_j$ is the zero matrix 
	and so the fact that they sum to $\mathbb{J}$ shows that the $\{ \mathbb{A}_i | i \in \Delta\}$ are a linearly independent set in $Mat_{|X|}(\mathbb{R})$. Condition (3), guarantees that their span is a algebra.
	
	\begin{definition}[Bose-Messner Algebra]
		Let $\mathfrak{A}=(X,d)$ be a finite association scheme with distance set $\Delta$ such that $|\Delta|=D+1$. We say $(X,d)$ has $D$ (nonzero) classes. The Bose-Messner algebra of the scheme is defined as
		$$BM(\mathfrak{A})=\mathbb{R}-\text{span of } \{ \mathbb{A}_j | j \in \Delta \} \subseteq Mat_{|X|}(\mathbb{R}).$$
		
		It is a $D+1$ dimensional $\mathbb{R}$-commutative algebra with basis $\{ \mathbb{A}_j | j \in \Delta \}$ and multiplication determined by the rule 
		$\mathbb{A}_i \mathbb{A}_j = \sum_{k} p_{i,j}^k \mathbb{A}_k$.
		
		The complexification of this algebra is called the complex Bose-Messner algebra and consists of the $\mathbb{C}$-span of the same $\mathbb{A}_j$ matrices.
	\end{definition}
	
	\begin{example}
		Let $X=(V,E)$ be a distance regular graph of diameter $D$, then $V$ equipped with the graph metric is an association scheme on $D$ (nonzero) classes. The theory of association schemes was introduced historically as a generalization of the theory of distance regular graphs.
	\end{example}
	
	The next example is the primary example considered in this paper.
	
	\begin{example}[Euclidean association scheme]
		Let $q$ be an odd prime power, $n\geq 2$, and $V=\mathbb{F}_q^n$ equipped with the "Euclidean" quadratic form $Q(v)=v_1^2+\dots+v_n^2$ and "distance" $Q(v-w)$.
		Then given $x \neq y,v \neq w \in V$ with $Q(x-y)=Q(v-w)$, Witt's theorem shows there is a isometry of $V$ (consisting of a composition of a translation and matrix multiplication by A where 
		$A \in O(n,q)=\{ \mathbb{A} \in Mat_n(\mathbb{F}_q) | A^T A = I \}$ is an "orthogonal" matrix), taking the pair $x,y$ to the pair of $v,w$. 
		
		When $n \geq 3$ or $n=2$ and $q=1 \text{ mod } 4$, it is possible $Q(x-y)=0$ but $x \neq y$ and so we distinguish between distance $0$ and distance $\bar{0}$ by declaring $d(x,y)=\bar{0}$ if and only if $x=y$ and $d(x,y)=Q(x-y)$ whenever $x \neq y$. Thus in $\mathbb{Z}_5^2$, $$d((0,0),(1,2))=1^2+2^2=5=0 \neq \bar{0}.$$ The distance set is $\Delta=\mathbb{F}_q \cup \{ \bar{0} \}$ when 
		$n \geq 3$ or $n=2, q=1 \text{ mod } 4$ but when $n=2, q=3 \text{ mod } 4$, $\Delta=(\mathbb{F}_q-\{ 0\})\cup \{ \bar{0} \}$ as $Q(x-y) \neq 0$ when $x \neq y$ in that case.
		
		Witt's theorem establishes that $(V,d)$ is an association scheme by establishing property (3).
	\end{example}
	
	\begin{example}
		Let $q$ be an odd prime power, $q = 3 \text{ mod 4}$. then $-1$ is not a square in $\mathbb{F}_q$ and so the Galois extension $E=\mathbb{F}_q[i]$ of $\mathbb{F}_q$ is a degree $2$ extension where $i$ is a root of $x^2+1$. The Galois norm map $N: E^{\times} \to \mathbb{F}_q^{\times}$ has $N(a+bi)=(a+bi)(a-bi)=a^2+b^2$ and this agrees with the Euclidean quadratic form $Q$ on $V=\mathbb{F}_q^2$ and so $Q(x-y)=0 \iff x=y$ in this case. Note as $N$ is a homorphism, $$|S_j((0,0))|=|N^{-1}(j)|=\frac{q^2-1}{q-1}=q+1$$ for all $j \in \mathbb{F}_q^{\times} = \mathbb{F}_q-\{ 0 \}.$ Thus in the corresponding association scheme, 
		$p_{j,j}^{\bar{0}} = q+1$ whenever $j \in \mathbb{F}_q^{\times}$ while $p_{\bar{0},\bar{0}}^{\bar{0}}=1$.
	\end{example}
	
	Given a finite association scheme $\mathfrak{A}=(X,d)$ on $D$ classes, the matrices $\{ \mathbb{A}_j | j \in \Delta \}$ are a real basis for the corresponding Bose-Messner algebra $BM(\mathfrak{A})$.
	As the matrices are symmetric and as they commute, they can be simultaneously (orthogonally) diagonalized on their action on $\mathbb{R}^X$ and this vector space splits as an orthogonal direct sum of simultaneous eigenspaces of all the $\mathbb{A}_j$ operators. These simultaneous eigenspaces are called weight spaces. Let $\{ E_i, i \in \Delta' \}$ be the collection of orthogonal projection operators to the individual weight spaces. Linear algebra guarantees that each $E_i$ is a polynomial expression in the $A_j$'s and hence lies in the Bose-Messner algebra and thus is a linear combination of the $A_j$. Conversely as each $A_j$ is constant on the image of any $E_i$, each $A_j$ can be trivially written as a linear combination of the $E_i$.
	Thus the $\{ E_i | i \in \Delta' \}$ are a different, "spectral basis" of the Bose Messner algebra $BM(\mathfrak{A})$ and hence $|\Delta| = |\Delta'|$. We will refer to the original basis $\{ \mathbb{A}_j | j \in \Delta \}$ of $BM(\mathfrak{A})$
	as the geometric basis of the Bose-Messner algebra as these encode the distances of the scheme.
	
	Many of the main results of association schemes, and distance regular graphs in particular, arise from the interactions between the geometric and spectral basis of the Bose-Messner algebra of the scheme.
	
	In particular two fundamental change of basis matrices are defined as follows:
	
	\begin{definition}
		Let $(X,d)$ be a finite association scheme on $D$ (nonzero) classes and let $\{ A_j | j \in \Delta \}$ and $\{ E_i | i \in \Delta'\}$ denote the  "geometric" and "spectral" basis of the corresponding Bose-Messner algebra.
		The $(D+1) \times (D+1)$ real matrices $P$ and $Q$ are defined via
		$$
		A_j = \sum_{i \in \Delta'} P_{i,j} E_i \text{ for all } j \in \Delta
		$$
		and 
		$$
		E_i = \frac{1}{|X|} \sum_{j \in \Delta} Q_{j,i} A_j \text{ for all } i \in \Delta'
		$$
		Thus $P Q = |X| I = Q P$.
	\end{definition}
	
	Note the $\{ P_{i,j} | i \in \Delta'\}$ are just the eigenvalues of $A_j$, the adjacency matrix of the distance $j$-graph of the scheme. Note that $|\Delta'|=|\Delta|=D+1$ is usually much smaller than $|X|$ 
	and so the $|X| \times |X|$ matrix $A_j$ has lots of multiplicities in its eigenvalues, encoded by $rank(E_i)$ which is the dimension of the $i$th weight space.
	
	\begin{definition}[Intersection Matrices]
		Given an association scheme on $D$ (nonzero) classes with intersection numbers $p_{i,j}^k$, $i,j,k \in \Delta$. The $(D+1) \times (D+1)$ intersection matrix $L_i$ is defined via $(L_i)_{k,j}=p_{i,j}^k$ for all $i \in \Delta$.
		
		It turns out the eigenvalues of $L_j$ are the same as the eigenvalues of the $|X| \times |X|$ matrix $A_j$, i.e. are also $\{ P_{i,j} | i \in \Delta'\}$.
	\end{definition}
	
	The matrices $P, Q, L_j, A_i, E_j$ satisfy many interesting identities in an association scheme. For our purpose we focus on Delsarte's linear programming bound.
	Let $(X,d)$ be a finite association scheme and $Y \subseteq X$. For each $j \in \Delta$, define $a_j=\frac{| \{ (y_1,y_2) \in Y \times Y | d(y_1,y_2)=j \}|}{|Y|} =\frac{\chi_Y^T A_j \chi_Y}{\chi_Y^ T \chi_Y} \in \mathbb{Q}_{\geq 0}$ where $\chi_Y$ is the characteristic column vector of $Y$. Then the row vector $a=(a_j)_{j \in \Delta}$ is called the inner distribution of $Y$ as it encodes the frequency of various distances on the pairs in the subset $Y$. This inner distribution satisfies Delsarte's linear programming bound condition
	$a Q \geq 0$ as well as $\sum_{j \in \Delta} a_j = |Y|$ and these conditions give us many constraints on distances induced on subsets $Y$ of the scheme. 
	
	\section{Eigenvalues of the Distance $t$-graphs}
		
	For any $t \in \mathbb{F}_q$, let $X(q,t)$ denote the associated distance $t$-graph on $\mathbb{F}_q^d$. In this section, we describe the spectrum of the adjacency matrix of this graph, $A_t$, when $t \neq \bar{0}$, i.e., for $t \in \mathbb{F}_q$. This spectrum was previously computed by various authors for example in \cite{IR}, \cite{T} but we include a brief discussion here to be self-contained.

	Standard spectral theory of finite Cayley graphs over Abelian groups, shows that the complex eigenfunctions of this graph's adjacency operator $A_t$ are the characters $\chi_m(x)=\chi(m \cdot x)$ as $m$ ranges over $\mathbb{F}_q^d$. Here $\cdot$ stands for dot product. The corresponding eigenvalue of $A_t$ on $\chi_m$ which we will denote $\lambda_{m,t,d}$ (recall $d$ is the dimension of $V$) is given by 
	$$
	\lambda_{m,t,d} =\sum_{x \in S_{t,d}} \chi_m(x) = \sum_{x \neq 0, Q(x)=t} \chi(m \cdot x).
	$$
	
	Clearly $\lambda_{0,t,d} = |S_{t,d}|$. The size of these spheres were  originally calculated by Minkowski, and there are at most 3 distinct sphere sizes corresponding to $t=0$, $t$ a nonsquare in $\mathbb{F}_q$, and $t$ a nonzero square in $\mathbb{F}_q$. This is because scaling by $\lambda \in \mathbb{F}_q^{\times}$ gives a bijection between $S_{t,d}$ and $S_{\lambda^2 t, d}$.
	
	When $m,t$ are both nonzero, these eigenvalues are given by twisted Kloosterman sums. We provide the calculation for the convenience of the reader. First note that by character orthogonality we have $\frac{1}{q}\sum_{s \in \mathbb{F}_q} \chi(sL)=\delta_{L,0}$ where $\delta_{L,0}=1$ if $L=0$ and $0$ otherwise. Then we compute, when $t \neq 0$,:

	\begin{align*}
		\lambda_{m,t,d} &=  \sum_{\{x | Q(x)=t\}} \chi(m \cdot x) =\frac{1}{q} \sum_{s \in \mathbb{F}_q} \sum_{x \in V} \chi(m \cdot x) \chi(s(Q(x)-t)) \\
		&= \frac{1}{q}\sum_{s \in \mathbb{F}_q} \chi(-st) \sum_{x_1,\dots, x_d \in \mathbb{F}_q} \chi(m_1x_1 + \dots + m_dx_d + s(x_1^2 + \dots + x_d^2)) \\
		&=\frac{1}{q}\sum_{s \in \mathbb{F}_q} \chi(-st) \prod_{i=1}^d \sum_{x_i \in \mathbb{F}_q} \chi(m_i x_i + sx_i^2) \\
		&=q^{d-1} \delta_{m,0} + \frac{1}{q}\sum_{s \neq 0} \chi(-st) \prod_{i=1}^d \sum_{x_i \in \mathbb{F}_q} \chi(m_i x_i + sx_i^2) \\
		&=q^{d-1} \delta_{m,0} + \frac{1}{q}\sum_{s \neq 0} \chi(-st) \prod_{i=1}^d \sum_{y_i \in \mathbb{F}_q} \chi(sy_i^2-\frac{m_i^2}{4s}) \\
	\end{align*}
	where in the last step we completed the square and substituted $y_i=x_i+\frac{m_i}{2s}$. Letting $G(s)=\sum_{y \in \mathbb{F}_q} \chi(sy^2)$ denote the corresponding Gauss sum, we conclude that 
	$$
	\lambda_{m,t,d}=q^{d-1} \delta_{m,0} + \frac{1}{q}\sum_{s \neq 0} \chi(-st-\frac{Q(m)}{4s}) G(s)^d.
	$$
	When $t=0$ the only correction needed is to exclude $x=0$ from the set $Q(x)=0$ which results in reducing this quantity by $1$. Thus we have generally that:
	$$
	\lambda_{m,t,d}=q^{d-1} \delta_{m,0} -\delta_{t,0} + \frac{1}{q}\sum_{s \neq 0} \chi(-st-\frac{Q(m)}{4s}) G(s)^d.
	$$
	
	It is a well known result of Gauss that $G(s)=\binom{s}{q} \epsilon(q) \sqrt{q}$ where $\binom{s}{q}$ is the Legendre symbol which is $+1$ when $s$ is a nonzero square modulo $q$, and $-1$ when $s$ is a nonsquare modulo $q$. The quantity $\epsilon(q)=1$ when $q \equiv 1 \text{ mod } 4$ and $\epsilon(q)=i$ when $q \equiv 3 \text{ mod } 4$. Thus we conclude:
	
	$$
	\lambda_{m,t,d}=q^{d-1} \delta_{m,0} -\delta_{t,0} + q^{\frac{d}{2}-1} \epsilon(q)^d \tilde{K}_d(-t,-\frac{Q(m)}{4})
	$$
	
	where $\tilde{K}_d(a,b)=\sum_{s \neq 0} \binom{s}{q}^d \chi(as+\frac{b}{s})$ is a twisted Kloosterman sum. Note that the dependence of $\tilde{K}_d(a,b)$ on $d$ is weak and only depends on the parity of $d$. When $d$ is even, $\tilde{K}_d(a,b)=K(a,b)=\sum_{s \neq 0}  \chi(as+\frac{b}{s})$ is the regular Kloosterman sum. Note that the $m$-dependence of the eigenvalue $\lambda_{m,t,d}$ 
	is only through $Q(m)=||m||=m_1^2+\dots+m_d^2$ and whether $m$ is the origin or not. Following the convention adopted in association schemes, we will write $Q(m)=\bar{0}$ if $m=0$ and  $Q(m)=0$ if $m \neq 0$ but $m_1^2+\dots+m_d^2=0$. 
	
	Thus $Span\{\chi_m | Q(m)=k\}=L_k$ is contained in an eigenspace of $A_t$ for each $t,k \in \mathbb{F}_q \cup \{ \bar{0} \}$. The complex vector space $\mathbb{C}[V]$, of all complex functions on $V$, decomposes as a orthogonal direct sum of the $L_k$ with respect to the standard Hermitian inner product. The $L_k$ are simultaneous eigenspaces of all the $A_t$ operators and $L_0$ is $0$ if and only if $S_0 = \emptyset$. Thus the number of nonzero $L_k$ is the same as the dimension of the corresponding Euclidean Association scheme and so the nonzero $L_k$ are exactly the weight spaces in the decomposition of the complex Bose-Messner algebra of the scheme. Note that $L_{\bar{0}}=Span\{\chi_0\} =Span\{ \mathfrak{1} \}$ consists of the constant functions.
	
	We record these results in the following theorem:
	
	\begin{theorem}
		\label{thm: Kloostermansareeigenvalues}
		Let $q$ be an odd prime power and $\mathbb{F}_q$ the finite field of order $q$. For $d$ an integer $\geq 2$, $m \in \mathbb{F}_q^d, t \in \mathbb{F}_q$ we have that the eigenvalue 
		$\lambda_{m,t,d}$ of the adjacency matrix $A_t$ of the distance-$t$-graph of $V=\mathbb{F}_q^d$ on the character $\chi_m(x)=\chi(m \cdot x)$ is given by:
		
		$$
		\lambda_{m,t,d}=q^{d-1} \delta_{m,0} -\delta_{t,0} + q^{\frac{d}{2}-1} \epsilon(q)^d \tilde{K}_d(-t,-\frac{Q(m)}{4})
		$$
		where $\tilde{K}_d(a,b)=\sum_{s \neq 0} \binom{s}{q}^d \chi(as+\frac{b}{s})$ is a twisted Kloosterman sum.
		
		If $W_k=Span\{\chi_m | Q(m)=k\}$ in $\mathbb{C}[V]$ then $\mathbb{C}[V]=\oplus_{k \in \mathbb{F}_q \cup \{ \bar{0} \}} W_k$ is an orthogonal direct sum decomposition of the vector space of complex valued functions on $V$ into simultaneous eigenspaces of all the $A_t$ operators, $t \in \mathbb{F}_q \cup \{ \bar{0}\}$. The orthogonal projection $E_k$ onto the (nonzero) $W_k$ give the spectral basis of the complex Bose Messner Algebra of the corresponding Euclidean association scheme.		
	\end{theorem}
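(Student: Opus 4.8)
The plan is to split the statement into two independent parts: the closed form for the eigenvalue $\lambda_{m,t,d}$, and the identification of the subspaces $W_k$ as the weight spaces of the Euclidean scheme.

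For the eigenvalue formula, I would first recall that $A_t$ is the adjacency matrix of the Cayley graph on the abelian group $(\mathbb{F}_q^d,+)$ with symmetric connection set $S_{t,d}$, so by the standard representation theory of Cayley graphs over finite abelian groups each additive character $\chi_m$ is an eigenfunction of $A_t$ with eigenvalue $\lambda_{m,t,d}=\sum_{x\in S_{t,d}}\chi_m(x)$; as the $q^d$ characters are linearly independent, this already diagonalizes $A_t$. What remains is to evaluate the character sum, which is exactly the computation displayed in the paragraphs preceding the theorem: insert the indicator of $\{Q(x)=t\}$ via additive character orthogonality $\tfrac1q\sum_{s\in\mathbb{F}_q}\chi(s(Q(x)-t))$, so that the sum over $x$ factors over the $d$ coordinates into one-variable sums $\sum_{x_i}\chi(m_ix_i+sx_i^2)$; complete the square in each to rewrite it as $\chi(-m_i^2/4s)\,G(s)$ with $G(s)=\sum_y\chi(sy^2)$; note the $s=0$ term contributes $q^{d-1}\delta_{m,0}$; subtract $1$ when $t=0$ to exclude the vertex $x=0$ from the sphere; and finally substitute Gauss's evaluation $G(s)=\binom{s}{q}\epsilon(q)\sqrt q$, which turns $\prod_i G(s)$ into $q^{d/2}\epsilon(q)^d\binom{s}{q}^d$ and leaves precisely the twisted Kloosterman sum $\tilde K_d(-t,-Q(m)/4)=\sum_{s\neq 0}\binom{s}{q}^d\chi(-ts-\tfrac{Q(m)}{4s})$. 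This is the claimed formula, and in particular it recovers $\lambda_{0,t,d}=|S_{t,d}|$.

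For the second part, the point to exploit is that the right-hand side of the formula depends on $m$ only through the value $Q(m)\in\mathbb{F}_q\cup\{\bar 0\}$, with the convention $Q(m)=\bar 0$ precisely when $m=0$. Hence for fixed $k$ all characters $\chi_m$ with $Q(m)=k$ share the same eigenvalue for every $A_t$, so $W_k=\mathrm{Span}\{\chi_m:Q(m)=k\}$ lies inside a single simultaneous eigenspace of the commuting family $\{A_t\}$, i.e.\ inside one weight space. Since the $\chi_m$ form an orthonormal basis of $\mathbb{C}[V]$ partitioned according to the value of $Q(m)$, we get the orthogonal direct sum $\mathbb{C}[V]=\bigoplus_k W_k$. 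To see that the nonzero $W_k$ are exactly the weight spaces (not a strict refinement), I would count: $W_{\bar 0}$ is the line of constants, $W_k\neq 0$ for every $k\in\mathbb{F}_q^\times$ since $S_{k,d}\neq\emptyset$ when $d\geq 2$, and $W_0\neq 0$ if and only if $S_{0,d}\neq\emptyset$, which by the description of the Euclidean scheme is exactly the condition $0\in\Delta$. Thus the number of nonzero $W_k$ equals $|\Delta|$, which also equals the dimension of the Bose-Messner algebra (via the geometric basis) and hence the number of weight spaces (via the spectral basis). Since every weight space is nonzero and is a direct sum of some of the $W_k$, partitioning $|\Delta|$ nonzero $W_k$ among $|\Delta|$ nonempty weight spaces forces each weight space to be a single $W_k$; therefore the orthogonal projections $E_k$ onto the nonzero $W_k$ are precisely the idempotents constituting the spectral basis of the complex Bose-Messner algebra.

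I expect the one genuinely non-routine step to be this last identification — ruling out both the merging of two distinct nonzero $W_k$ into one weight space and the need to split some $W_k$ further. The dimension equality $|\Delta|=|\Delta'|$ from the general theory of schemes (already recorded in the excerpt), together with the elementary nonemptiness of the spheres $S_{k,d}$ for $d\geq 2$, is what forces the count to be tight; the delicate bookkeeping is the case $k=0$, where $W_0$ vanishes exactly in the planar $q\equiv 3\bmod 4$ situation, which must be matched against the two possible forms of $\Delta$. The eigenvalue computation itself is, by contrast, a routine Gauss-sum manipulation once Gauss's formula for $G(s)$ is in hand.
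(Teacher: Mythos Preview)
Your proposal is correct and follows essentially the same route as the paper: the eigenvalue formula is obtained by the identical orthogonality/completing-the-square/Gauss-sum computation, and the identification of the $W_k$ with the weight spaces is done by the same counting argument (the number of nonzero $W_k$ equals $|\Delta|$, hence equals the number of weight spaces). Your write-up is in fact more explicit than the paper's on the second part, spelling out the pigeonhole step and the $k=0$ bookkeeping that the paper compresses into one sentence.
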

	
	Note it is not hard to see that the complex conjugate of $\tilde{K}_d(a,b)$ is $$\sum_{s \neq 0} \binom{s}{q}^d \chi(-as - \frac{b}{s})=\sum_{s \neq 0} \binom{-s}{q}^d \chi(as + \frac{b}{s})$$ 
	and so $\tilde{K}_d(a,b)$ is real if either $d$ is even or $q\equiv 1 \text{ mod } 4$ (i.e. when $-1$ is a square modulo $q$) but is purely imaginary when $d$ is odd and $q \equiv 3 \text{ mod 4}$. However the quantity $\epsilon(q)^d \tilde{K}_d(a,b)$ and hence $\lambda_{m,t,d}$ are real numbers of course as they are eigenvalues of symmetric matrices.
	
	\section{Some properties of (twisted) Kloosterman sums}
	
	Let $q=p^{\ell}$ be an odd prime power and define $\tilde{K}_d(a,b)=\sum_{s \neq 0} \binom{s}{q}^d \chi(as+\frac{b}{s})$ for $a,b \in \mathbb{F}_q$.
	
	Let $\xi_p=e^{\frac{2 \pi i}{p}}$ be a primitive $p$th root of unity and $\mathbb{Q}[\xi_p]$ be the corresponding cyclotomic field. Note as $\chi(x)=\xi^{Tr(x)}$ where $Tr: \mathbb{F}_q \to \mathbb{F}_p$ is the Galois trace, we have that $\tilde{K}_d(a,b)$ are elements in the ring of integers of $\mathbb{Q}[\xi_p]$. Recall that the Galois group $Gal(\mathbb{Q}[\xi_p]/\mathbb{Q})$ is Abelian of order $p-1$ and contains automorphisms of the form $\psi_a$ determined by the property that they take $\xi$ to $\xi^a$ for all $a \in \mathbb{F}_p^*$ and fix the rational subfield.
	
	\begin{proposition}
	\label{prop:basicKloostermanIdentities}
		Let $p$ be an odd prime and $q=p^{\ell}$ and define $$\tilde{K}_d(a,b)=\sum_{s \neq 0} \binom{s}{q}^d \chi(as+\frac{b}{s})$$ for $a,b \in \mathbb{F}_q$. These are algebraic integers in the cyclotomic field $\mathbb{Q}[\xi_p]$ which satisfy: \\
		(1) $\tilde{K}_d(a,b)=\tilde{K}_d(b,a)$ \\
		(2) $\tilde{K}_d(a,b)=\binom{a}{q}^d\tilde{K}_d(1,ab)$ when $a \neq 0$ \\
		(3) The quantity $\tilde{K}_d(0,0)$ is equal to $q-1$ when $d$ is even and equal to $0$ when $d$ is odd. Thus $\tilde{K}_d(0,0)=(q-1)\frac{(-1)^d+1}{2}$. \\
		(4) The quantity $\tilde{K}_d(1,0)=\tilde{K}_d(0,1)$ equals $-1$ when $d$ is even and $G(1)$ when $d$ is odd where $G(1)=\sum_{s \in \mathbb{F}_q} \chi(s^2)=\epsilon(q)\sqrt{q}$ is a quadratic Gauss sum.\\
		(5) For $c \in \mathbb{F}_p^*$ we have that $\psi_c(\tilde{K}_d(a,b))=\tilde{K}_d(ca,cb)$ where $\psi_c$ is the Galois automorphism of $\mathbb{Q}[\xi_p]$ over $\mathbb{Q}$ which takes $\xi_p$ to $\xi_p^c$.
		
	\end{proposition}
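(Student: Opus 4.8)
The plan is to derive all five identities directly from the defining character sum $\tilde{K}_d(a,b)=\sum_{s\neq 0}\binom{s}{q}^{d}\chi(as+\tfrac{b}{s})$, using nothing beyond bijections of $\mathbb{F}_q^{\times}$, orthogonality of additive characters in the form $\sum_{s\in\mathbb{F}_q}\chi(s)=0$, the elementary facts $\binom{s^{-1}}{q}=\binom{s}{q}$ and $\sum_{s\neq 0}\binom{s}{q}=0$, and the classical Gauss sum evaluation $G(1)=\epsilon(q)\sqrt{q}$ recalled above. For (1) I would re-index the sum by $u=s^{-1}$, a bijection of $\mathbb{F}_q^{\times}$: since $\binom{s}{q}^{d}=\binom{u}{q}^{d}$ and $as+\tfrac{b}{s}=\tfrac{a}{u}+bu$, the sum becomes $\tilde{K}_d(b,a)$, and this is valid for all $a,b$, including $b=0$. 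For (2), with $a\neq 0$, re-index by $u=as$: then $\binom{s}{q}^{d}=\binom{a}{q}^{d}\binom{u}{q}^{d}$ and $as+\tfrac{b}{s}=u+\tfrac{ab}{u}$, so the factor $\binom{a}{q}^{d}$ pulls out and the remaining sum is $\tilde{K}_d(1,ab)$.

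For (3) and (4) we specialize $a,b$. If $d$ is even, $\binom{s}{q}^{d}=1$ for every $s\in\mathbb{F}_q^{\times}$, so $\tilde{K}_d(0,0)=\sum_{s\neq 0}1=q-1$ and $\tilde{K}_d(1,0)=\sum_{s\neq 0}\chi(s)=-\chi(0)=-1$ by character orthogonality. If $d$ is odd, $\binom{s}{q}^{d}=\binom{s}{q}$; since $\mathbb{F}_q^{\times}$ has equally many squares and non-squares, $\tilde{K}_d(0,0)=\sum_{s\neq 0}\binom{s}{q}=0$, while $\tilde{K}_d(1,0)=\sum_{s\in\mathbb{F}_q}\binom{s}{q}\chi(s)$ is the quadratic Gauss sum $g$. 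To identify $g$ with $G(1)=\sum_{s\in\mathbb{F}_q}\chi(s^{2})$ I would substitute $t=s^{2}$: the value $0$ is attained once and each non-zero square twice, so $G(1)=\chi(0)+\sum_{t\neq 0}\bigl(1+\binom{t}{q}\bigr)\chi(t)=1+\bigl(\sum_{t\neq 0}\chi(t)\bigr)+g=1+(-1)+g=g$, and Gauss' theorem gives $g=\epsilon(q)\sqrt{q}$. The equality $\tilde{K}_d(0,1)=\tilde{K}_d(1,0)$ is a case of (1), and combining the two parities gives the uniform closed form $\tilde{K}_d(0,0)=(q-1)\tfrac{(-1)^{d}+1}{2}$.

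For (5), recall $\chi(x)=\xi_p^{Tr(x)}$ with $Tr:\mathbb{F}_q\to\mathbb{F}_p$ the $\mathbb{F}_p$-linear Galois trace. For $c\in\mathbb{F}_p^{*}$ we then have $\psi_c(\chi(x))=\xi_p^{c\,Tr(x)}=\xi_p^{Tr(cx)}=\chi(cx)$, and $\psi_c$ fixes each rational integer $\binom{s}{q}^{d}\in\{0,\pm 1\}$. Applying $\psi_c$ term-by-term to the defining sum therefore gives $\psi_c(\tilde{K}_d(a,b))=\sum_{s\neq 0}\binom{s}{q}^{d}\chi\bigl(c(as+\tfrac{b}{s})\bigr)=\sum_{s\neq 0}\binom{s}{q}^{d}\chi\bigl((ca)s+\tfrac{cb}{s}\bigr)=\tilde{K}_d(ca,cb)$, with no substitution needed.

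There is no substantial obstacle here; these are warm-up identities. The only places requiring any care are: (i) confirming that the substitutions in (1) and (2) stay valid in the degenerate cases — the map $s\mapsto s^{-1}$ is a bijection of $\mathbb{F}_q^{\times}$ whether or not $b=0$, while (2) genuinely uses the hypothesis $a\neq 0$ (so that $s\mapsto as$ makes sense as a scaling); and (ii) the bookkeeping in (4) that identifies $\sum_{s\neq 0}\binom{s}{q}\chi(s)$ with $G(1)$ and then invokes the classical modulus-and-sign evaluation $G(1)=\epsilon(q)\sqrt{q}$ already quoted in the excerpt.
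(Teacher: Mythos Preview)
Your proposal is correct and follows essentially the same route as the paper: the same substitutions $s\mapsto s^{-1}$ and $s\mapsto as$ for (1) and (2), the same parity split and Gauss-sum identification for (3) and (4), and the same term-by-term application of $\psi_c$ via $\mathbb{F}_p$-linearity of the trace for (5). Your treatment of (4) is slightly more explicit in verifying $\sum_{s\neq 0}\binom{s}{q}\chi(s)=G(1)$, but the underlying argument is identical.
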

	\begin{proof}
		To prove (1), use the change of variable $s \to \frac{1}{s}$. To prove (2) use the change of variable $s'=as$ for nonzero $a$. (3) follows trivially once one notes that in any finite field, an equal number of elements are nonzero squares as are nonsquares.
		
		For (4), first note that when $d$ is even we have 
		$\tilde{K}_d(1,0)=\sum_{s \neq 0}  \chi(s)$ which by character orthogonality is equal to $-\chi(0)=-1$. When $d$ is odd, we have $\tilde{K}_d(1,0)=\sum_{s \neq 0}  \binom{s}{q}\chi(s)=\sum_{s \in \mathbb{F}_q}  \binom{s}{q}\chi(s)=\sum_{s \in \mathbb{F}_q}  (\binom{s}{q}+1)\chi(s)=\sum_{u \in \mathbb{F}_q} \chi(u^2)=G(1).$
		
		Finally for (5), we note that $\tilde{K}_d(a,b)=\sum_{s \neq 0} \binom{s}{q}^d \xi_p^{Tr(as+\frac{b}{s})}$ and apply the Galois automorphism $\psi_c$ to get 
		$$\psi_c(\tilde{K}_d(a,b)) = \sum_{s \neq 0} \binom{s}{q}^d \xi_p^{cTr(as+\frac{b}{s})}=\sum_{s \neq 0} \binom{s}{q}^d \xi_p^{Tr(cas+\frac{cb}{s})}=\tilde{K}_d(ca,cb)$$ 
		as the Galois trace $Tr: \mathbb{F}_q \to \mathbb{F}_p$ is $\mathbb{F}_p$-linear.
		
	\end{proof}
	
	Note that when $d$ is even, the $\tilde{K}_d(a,b)$ coincide with (untwisted) Kloosterman sums $K(a,b)=\sum_{s \neq 0} \chi(as+\frac{b}{s})$ and the properties above apply to those also. 
	In particular when $q=p$, this implies that the $\{ K(1,u) | u \text{ a nonzero square } \}$ are all Galois conjugates in $\mathbb{Q}[\xi_p]$. This is because for $c \neq 0$, $\psi_c(K(1,1))=K(c,c)=K(1,c^2)$.
	
	We will need one more property of these Kloosterman numbers that requires some Fourier analysis. Recall if $f: V \to \mathbb{C}$ then we define the Fourier transform 
	$$\hat{f}(m)=\frac{1}{q^d}\sum_{x \in V} f(x) \chi(-m \cdot x).$$ It is easy to show via character orthogonality that we can recover $f$ via $$f(x) = \sum_{m \in V} \hat{f}(m)\chi(m \cdot x).$$
	
	Note if we let $S_t$ denote the indicator function of the corresponding $t$-sphere, $t \in \Delta$, then $q^d \hat{S}_t(m)=\sum_{x \in \mathbb{F}_q^d} S_t(x)\chi(-m \cdot x) = \sum_{x \in S_t} \chi(m \cdot x) = \lambda_{m,t,d}$ from the previous section. (We define $\lambda_{m,\bar{0},d}=1$ to also make it work in that case). By disjointness of spheres of different radii, it follows that for any $i \neq j \in \Delta$, 
	
	\begin{align*} 0 = S_i(x)S_j(x) &=(\sum_{m \in V} \hat{S}_i(m)\chi(m \cdot x) )(\sum_{n \in V} \hat{S}_j(n) \chi(n \cdot x)) \\
	0 &=\sum_{t \in V} (\sum_{m \in V} \hat{S}_i(m)\hat{S}_j(t-m)) \chi(t \cdot x). \\
	\end{align*}
	
By character independence, we conclude that $\sum_{m \in V} \hat{S}_i(m) \hat{S}_j(t-m) = 0$ for all $t \in \mathbb{F}_q^d$ when $i \neq j \in \Delta$. When $i=j$, as $S_i(x)S_i(x)=S_i(x)$, we instead conclude 
$\hat{S}_i(t) = \sum_{m \in V} \hat{S}_i(m) \hat{S}_i(t-m)$.

Using that $q^d \hat{S}_t(m)=\lambda_{m,t,d}$ and that $\hat{S}_t(m)=\hat{S}_t(-m)$, this becomes the next proposition:

\begin{proposition}
\label{pro: sillyprop}
Consider the Euclidean association scheme on $V=\mathbb{F}_q^d$ with distance set $\Delta$. Let $\lambda_{m,t,d}$ be as in Theorem~\ref{thm: Kloostermansareeigenvalues} with $\lambda_{m,\bar{0},d}=1$. Then for any $i \neq j \in \Delta$ and $t \in V$, we have 
$$
\sum_{m \in V} \lambda_{m,i,d} \lambda_{t-m,j,d} = 0,
$$
and in particular when $t=0$,
$$
\sum_{m \in V} \lambda_{m,i,d} \lambda_{m,j,d} = 0.
$$
When $i=j$ we instead have:
$$
\sum_{m \in V} \lambda_{m,i,d} \lambda_{t-m,i,d} = q^d \lambda_{t,i,d},
$$
and
$$
\sum_{m \in V} \lambda_{m,i,d}^2 = q^d \lambda_{0,i,d}=q^{2d}\hat{S}_i(0)=q^d|S_i|.
$$

\end{proposition}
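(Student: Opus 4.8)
\section*{Proof proposal}

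The plan is to route everything through the Fourier transform on $V=\mathbb{F}_q^d$ and to use the single structural fact that spheres of distinct radii are disjoint. For $t\in\Delta$ let $S_t\colon V\to\{0,1\}$ be the indicator function of the $t$-sphere about the origin (so $S_{\bar 0}$ is the indicator of $\{0\}$). As recorded just before Theorem~\ref{thm: Kloostermansareeigenvalues}, one has $q^d\hat S_t(m)=\sum_{x\in S_t}\chi(m\cdot x)=\lambda_{m,t,d}$ for $t\in\mathbb{F}_q$, while $q^d\hat S_{\bar 0}(m)=\chi(0)=1=\lambda_{m,\bar 0,d}$ with the stated convention; thus the dictionary $q^d\hat S_t(m)=\lambda_{m,t,d}$ is uniform over all of $\Delta$. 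I would also note at the outset that each $S_t$ is real-valued and \emph{even} (since $Q(-v)=Q(v)$), so $\hat S_t$ is real and even and hence $\lambda_{-m,t,d}=\lambda_{m,t,d}$; this is what will let us replace $t-m$ by $m$ after setting $t=0$.

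Next I would exploit that $\{S_t\}_{t\in\Delta}$ partitions $V$, which gives the pointwise identities $S_iS_j\equiv 0$ for $i\ne j$ and $S_i^2=S_i$ for every $i$. Taking Fourier transforms and applying the convolution formula $\widehat{S_iS_j}(t)=\sum_{m\in V}\hat S_i(m)\hat S_j(t-m)$, then reading off coefficients via linear independence of the characters $x\mapsto\chi(t\cdot x)$, one obtains, for all $t\in V$,
$$\sum_{m\in V}\hat S_i(m)\hat S_j(t-m)=0\ \ (i\ne j),\qquad \sum_{m\in V}\hat S_i(m)\hat S_i(t-m)=\hat S_i(t)\ \ (\text{all }i).$$
This is exactly the computation displayed immediately before the proposition.

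Finally I would substitute $q^d\hat S_t(m)=\lambda_{m,t,d}$. Multiplying the first identity by $q^{2d}$ yields $\sum_{m}\lambda_{m,i,d}\lambda_{t-m,j,d}=0$, and specializing $t=0$ and using evenness $\lambda_{-m,j,d}=\lambda_{m,j,d}$ gives $\sum_{m}\lambda_{m,i,d}\lambda_{m,j,d}=0$. Multiplying the second identity by $q^{2d}$ yields $\sum_{m}\lambda_{m,i,d}\lambda_{t-m,i,d}=q^d\lambda_{t,i,d}$; at $t=0$ this is $\sum_{m}\lambda_{m,i,d}^2=q^d\lambda_{0,i,d}$, and since $\lambda_{0,i,d}=\sum_{x\in S_i}1=|S_i|$ (equivalently $q^d\hat S_i(0)=|S_i|$, hence $q^{2d}\hat S_i(0)=q^d|S_i|$), we get the final chain of equalities.

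There is no real obstacle here: the argument is a bookkeeping exercise in Plancherel/convolution once the dictionary $\lambda_{m,t,d}=q^d\hat S_t(m)$ is in place. The only two points needing a moment's care are (i) verifying that the convention $\lambda_{m,\bar 0,d}=1$ is precisely what makes this dictionary valid on the zero-distance class, so that the identities hold for all $i,j\in\Delta$ including $\bar 0$; and (ii) using the evenness of the spheres to pass between $\lambda_{m}$ and $\lambda_{-m}$ when specializing $t=0$.
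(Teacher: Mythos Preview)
Your proposal is correct and follows essentially the same route as the paper: both arguments use the dictionary $q^d\hat S_t(m)=\lambda_{m,t,d}$, the disjointness/idempotence identities $S_iS_j=\delta_{i,j}S_i$, expand via Fourier inversion (equivalently, your convolution identity $\widehat{S_iS_j}=\hat S_i*\hat S_j$), and invoke character independence together with the evenness $\hat S_t(-m)=\hat S_t(m)$ to pass to the $t=0$ case. Your write-up is slightly cleaner in naming the convolution formula explicitly and in flagging the $\bar 0$ convention check, but there is no substantive difference in method.
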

	
\section{The P and Q matrices of the Euclidean Association scheme}

         The calculation of the $P$-matrix of the Euclidean scheme in this section was first done in \cite{Kwok} with corrections in \cite{BST} but we provide the details independently here in language more connected to our applications.

	Let $\Delta$ be the "distance" set of the Euclidean association scheme on $V=\mathbb{F}_q^d$. Thus if $d \geq 3$ or $(d=2, q=1 \text{ mod } 4)$ we have $\Delta=\{ \bar{0} \} \cup \mathbb{F}_q$ 
	whereas when $(d=2, q=3 \text{ mod } 4)$, we have $\Delta=\{ \bar{0} \} \cup \mathbb{F}_q^{\times}$.
	
	By Theorem~\ref{thm: Kloostermansareeigenvalues}, it follows that both the spectral and geometric basis of the complex Bose-Messner algebra can be indexed by the distance set $\Delta$.
	Fix a linear ordering of $\Delta$ where $\bar{0}$ comes first and if $0 \in \Delta$, it comes second. For example the ordering $\bar{0} < 0 < 1 < \dots < q-1$ works when $(d=2, q=1 \text{ mod } 4)$ or $d \geq 3$ and when otherwise, i.e., when $(d=2, q=3 \text{ mod } 4)$ we drop $0$ from the list. 
	
	Then when $i, j \in \Delta$, we have $\mathbb{P}_{i,j}$ is by definition the eigenvalue of $A_j$ on the $i$th weight space 
	$W_i$ which was denoted $\lambda_{m,j,d}$ in previous sections where $m \in V$ is any vector with $Q(m)=i$. In other words, by definitions it follows that $\lambda_{m,j,d}=\mathbb{P}_{Q(m),j}$ for all $m \in V, j \in \Delta$.
	
	In particular, $\mathbb{P}_{\bar{0},t} = \lambda_{0,t,d}=q^d \hat{S}_t(0)=|S_t|$ for all $t \in \Delta$ and thus the top row of $\mathbb{P}$ lists the sphere sizes.
	Also $\mathbb{P}_{i,\bar{0}} = \lambda_{m,\bar{0},d}=1$ (here $m$ is any vector with $Q(m)=i$). Thus the leftmost column of $\mathbb{P}$ is all 1's (eigenvalues of $\mathbb{A}_{\bar{0}}$ which is the identity operator).
	
	On the other hand, If $0 \in \Delta$, $\mathbb{P}_{0,t} = \lambda_{m,t,d}$ for some nonzero vector $m$ of length $Q(m)=0$. By Theorem~\ref{thm: Kloostermansareeigenvalues}, 
	$\mathbb{P}_{0,t}=-\delta_{t,0} + q^{\frac{d}{2}-1}\epsilon(q)^d \tilde{K}_d(-t,0)=\mathbb{P}_{\bar{0},t}-q^{d-1}=|S_t|-q^{d-1}$. Thus the $0$-row (which would be listed second if it occurs) consists 
	of the deviations of sphere sizes from the expected size $q^{d-1}$.
	
	Proposition \ref{pro: sillyprop} can be recast in terms of the entries of $\mathbb{P}$ as follows:
	When $i \neq j \in \Delta$, we have $\sum_{m \in V} \lambda_{m,i,d} \lambda_{m,j,d} = 0$. Lumping terms in this sum by the length $t$ of the vector $m$ yields:
	$$
	\sum_{t \in \Delta} |S_t| \mathbb{P}_{t,i} \mathbb{P}_{t,j} = 0	
	$$
	Similarly we get when $i = j \in \Delta$, $\sum_{t \in \Delta} |S_t| \mathbb{P}_{t,i}^2 = q^d |S_i|$.
	
	It follows that $\mathbb{P}^T D \mathbb{P} = q^d D$ where $D$ is a diagonal matrix with the sizes of the spheres along the diagonal in the same order as they appear in the top row of $\mathbb{P}$.
	
	As no sphere sizes are zero (we only index distances that occur), $D$ is invertible and it follows that $\mathbb{P}^{-1} = \frac{1}{q^d} D^{-1} \mathbb{P}^T D$.
	
	As the $Q$-matrix is defined uniquely (as inverses are unique) by the condition $PQ=q^d I = QP$, it follows that $Q=D^{-1} \mathbb{P}^T D$. We summarize this next:
	
	\begin{theorem}
	\label{thm:PQmatrixmain}
	Let $q$ be any odd prime power, $d \geq 2$ and consider the Euclidean association scheme with distance set $\Delta$. Then $|\Delta|=q+1$ when $0 \neq \bar{0} \in \Delta$ and $|\Delta|=q$ otherwise when $0 \notin \Delta$, i.e. when $(d=2$ and $q = 3 \text{ mod } 4)$.
	
	The $|\Delta| \times |\Delta|$ matrices $\mathbb{P}$ and $Q$ of the association scheme (relative to some linear ordering of $\Delta$ for which $\bar{0}$ comes first) have:
	$\mathbb{P}_{\bar{0},j} = |S_j|$ for all $j \in \Delta$, $\mathbb{P}_{i,\bar{0}} = 1$ for all $i \in \Delta$ and otherwise for $i, j \in \Delta-\{ \bar{0} \}$ we have 
	$$
	P_{i,j} = -\delta_{j,0} + q^{\frac{d}{2}-1} \epsilon(q)^d \tilde{K}_d(-j,-\frac{i}{4})
         $$
	were $\tilde{K}_d(a,b)=\sum_{x \in \mathbb{F}_q^{\times}} \binom{x}{q}^d \chi(ax+\frac{b}{x})$ is a twisted Kloosterman sum and $\epsilon(q)$ is either $1$ or $i$ depending if $q$ is $1$ or $3$ mod $4$.
	
	If $D$ is the diagonal matrix whose diagonals are the sphere sizes (following the chosen linear ordering of $\Delta$), then $Q=D^{-1}\mathbb{P}^T D$ and $Q\mathbb{P}=\mathbb{P}Q=q^d\mathbb{I}$. Thus $Q_{i,j}=\frac{|S_j|}{|S_i|} P_{j,i}$.
	\end{theorem}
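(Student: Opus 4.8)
The plan is to assemble the pieces already established: Theorem~\ref{thm: Kloostermansareeigenvalues} identifies the weight spaces and computes the eigenvalues of each $A_j$, and Proposition~\ref{pro: sillyprop} supplies the orthogonality relations that pin down $Q$ from $\mathbb{P}$. Almost all of the work is bookkeeping against those two results.

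First I would record the size of $\Delta$. When $d \geq 3$, or when $d=2$ and $q \equiv 1 \bmod 4$, the form $Q$ represents $0$ on nonzero vectors, so every element of $\mathbb{F}_q$ occurs as a distance $Q(x-y)$ between distinct points; adjoining $\bar 0$ gives $|\Delta| = q+1$. When $d=2$ and $q \equiv 3 \bmod 4$, the example of the norm form of $\mathbb{F}_q[i]$ shows $Q(x-y)=0 \iff x=y$, so $0 \notin \Delta$ and $|\Delta| = q$. In either case the nonzero weight spaces $W_k = \mathrm{Span}\{\chi_m : Q(m) = k\}$ of Theorem~\ref{thm: Kloostermansareeigenvalues} are exactly those with $k \in \Delta$, so $\mathbb{P}$ and $Q$ are genuinely $|\Delta| \times |\Delta|$. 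Next I would unwind the definition of $\mathbb{P}$: since $A_j = \sum_i \mathbb{P}_{i,j} E_i$ and $W_i$ is contained in an eigenspace of $A_j$ with eigenvalue $\lambda_{m,j,d}$ for any $m$ with $Q(m)=i$, we get $\mathbb{P}_{i,j} = \lambda_{m,j,d}$. Feeding in the formula of Theorem~\ref{thm: Kloostermansareeigenvalues} splits into three cases: $i=\bar 0$ (so $m=0$) gives $\mathbb{P}_{\bar 0,j} = q^{d-1} - \delta_{j,0} + q^{d/2-1}\epsilon(q)^d\tilde K_d(-j,0) = q^d\hat S_j(0) = |S_j|$; $j=\bar 0$ gives $\mathbb{P}_{i,\bar 0} = \lambda_{m,\bar 0,d} = 1$; and for $i,j \in \Delta\setminus\{\bar 0\}$ the $\delta_{m,0}$ term vanishes, leaving $\mathbb{P}_{i,j} = -\delta_{j,0} + q^{d/2-1}\epsilon(q)^d\tilde K_d(-j,-i/4)$.

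For the $Q$-matrix I would use Proposition~\ref{pro: sillyprop}. Grouping the sum over $m \in V$ there by the common value $t = Q(m)$ and writing $\lambda_{m,i,d} = \mathbb{P}_{t,i}$, the relations become $\sum_{t \in \Delta}|S_t|\,\mathbb{P}_{t,i}\mathbb{P}_{t,j} = q^d\delta_{i,j}|S_i|$ for all $i,j \in \Delta$, which is precisely the matrix identity $\mathbb{P}^T D\mathbb{P} = q^d D$ with $D = \mathrm{diag}(|S_t|)_{t\in\Delta}$. Since we index only distances that actually occur, every $|S_t|$ is nonzero and $D$ is invertible, so $\mathbb{P}^{-1} = q^{-d} D^{-1}\mathbb{P}^T D$; because $Q$ is characterized by $\mathbb{P}Q = q^d\mathbb{I} = Q\mathbb{P}$, uniqueness of inverses gives $Q = D^{-1}\mathbb{P}^T D$, and reading off the $(i,j)$ entry gives $Q_{i,j} = \frac{|S_j|}{|S_i|}\mathbb{P}_{j,i}$.

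The only genuinely substantive input is the orthogonality relation $\mathbb{P}^T D\mathbb{P} = q^d D$ — equivalently Proposition~\ref{pro: sillyprop}, whose proof is the Fourier-analytic computation with sphere indicator functions carried out earlier; the rest of this theorem is assembly. So the one point needing care is the legitimacy of collapsing $\sum_{m\in V}$ into $\sum_{t\in\Delta}|S_t|(\cdots)$, which is justified by the fact (noted after Theorem~\ref{thm: Kloostermansareeigenvalues}) that $\lambda_{m,i,d}$ depends on $m$ only through $Q(m)$, together with the convention $\lambda_{m,\bar 0,d}=1$ that makes the $t=\bar 0$ and $j=0$ rows behave consistently.
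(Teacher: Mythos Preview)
Your proposal is correct and follows essentially the same approach as the paper's own proof: both read off the entries of $\mathbb{P}$ directly from Theorem~\ref{thm: Kloostermansareeigenvalues}, then group the orthogonality relations of Proposition~\ref{pro: sillyprop} by $Q(m)$ to obtain $\mathbb{P}^T D\mathbb{P} = q^d D$, and deduce $Q = D^{-1}\mathbb{P}^T D$ from the invertibility of $D$. There is no substantive difference in method or emphasis.
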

	
	Note that when $d=2, q=3 \text{ mod } 4$, then $0 \notin \Delta$ and $P_{i,j}=-K(-j,\frac{-i}{4})=-K(1,\frac{ij}{4})$ for any $i, j \in \Delta-\{ \bar{0} \}$ and so other than the first row and column of $\mathbb{P}$, the other entries are negatives of Kloosterman numbers. In this case, we also have $|S_i|=q+1$ for all $i \neq \bar{0}$ and $|S_{\bar{0}}|=1$. If we denote $\mathfrak{1}$ to be the $q-1$ dimensional column vector of all $1$s, and $K$ to the symmetric $(q-1) \times (q-1)$ matrix with $K_{i,j}=K(1,\frac{ij}{4})=K_{j,i}, i,j \in \mathbb{F}_q^{\times}$ then we have:
$$\mathbb{P}=\begin{bmatrix} 1 & (q+1)\mathfrak{1}^T \\ \mathfrak{1} & -K   \end{bmatrix}=Q.$$
Thus $\mathbb{P}^2=q^2\mathbb{I}$ for the Euclidean Association scheme when $d=2, q = 3 \text{ mod } 4$

On the other hand, when $d=2, q=1 \text{ mod } 4$, then $0 \in \Delta$, $|S_i|=q-1$ if $i \neq 0, \bar{0}$ and $|S_0|=2q-2, |S_{\bar{0}}|=1$. With respect to any linear ordering with $\bar{0} < 0 < \mathbb{F}_q^{\times}$, the $(q+1) \times (q+1)$ $\mathbb{P}$ and $Q$ matrices are given in this case by:
$$\mathbb{P}=\begin{bmatrix} 1 & 2q-2 & (q-1)\mathfrak{1}^T \\ 1 & q-2 & -\mathfrak{1}^T \\ \mathfrak{1} & (-2)\mathfrak{1} & K   \end{bmatrix}$$ and
again one can verify that $\mathbb{P}=Q$ using the formulas of the last theorem.
Thus again one has $\mathbb{P}^2=q^2\mathbb{I}$ though the dimensions of $\mathbb{P}$ are  one larger in this case compared to the last one. 

We record these dimension 2 results in the next proposition:

\begin{proposition}
Let $\mathbb{F}_q$ be a finite field of odd characteristic and consider the Euclidean Association scheme when dimension $d=2$. Let $\mathfrak{1}$ to be the $q-1$ dimensional column vector of all $1$s, and $K$ be the symmetric $(q-1) \times (q-1)$ matrix with $K_{i,j}=K(1,\frac{ij}{4})=K_{j,i}, i,j \in \mathbb{F}_q^{\times}$ (with respect to some linear ordering of $\mathbb{F}_q^{\times}$). Order the distance set $\Delta$ via a linear ordering with $\bar{0} < 0 < \mathbb{F}_q^{\times}$.

Then if $q=3 \text{ mod } 4$, we have $$\mathbb{P}=\begin{bmatrix} 1 & (q+1)\mathfrak{1}^T \\ \mathfrak{1} & -K   \end{bmatrix}=Q$$
are $q \times q$ matrices.

On the other hand, when $q=1 \text{ mod } 4$, we have
$$\mathbb{P}=\begin{bmatrix} 1 & 2q-2 & (q-1)\mathfrak{1}^T \\ 1 & q-2 & -\mathfrak{1}^T \\ \mathfrak{1} & (-2)\mathfrak{1} & K   \end{bmatrix}=Q$$ are
 $(q+1) \times (q+1)$ matrices.

In either case, $\mathbb{P}^2=q^2\mathbb{I}$.

\end{proposition}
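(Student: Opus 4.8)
The plan is to specialize the general formulas of Theorem~\ref{thm:PQmatrixmain} to the case $d=2$, read off each entry of $\mathbb{P}$ with the help of the Kloosterman identities in Proposition~\ref{prop:basicKloostermanIdentities}, then deduce $Q=\mathbb{P}$ from the relation $Q=D^{-1}\mathbb{P}^T D$, and finally combine this with $\mathbb{P}Q=q^2\mathbb{I}$ to conclude $\mathbb{P}^2=q^2\mathbb{I}$.

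First I would put $d=2$ in Theorem~\ref{thm:PQmatrixmain}. Then $q^{\frac d2-1}=1$, the twisted sum $\tilde K_2$ is the ordinary Kloosterman sum $K$, and $\epsilon(q)^2$ equals $1$ when $q\equiv 1\pmod 4$ and $-1$ when $q\equiv 3\pmod 4$. Hence for $i,j\in\Delta-\{\bar 0\}$ one has $P_{i,j}=-\delta_{j,0}+\epsilon(q)^2\,K(-j,-i/4)$, while $P_{\bar 0,j}=|S_j|$ and $P_{i,\bar 0}=1$. The sign $\epsilon(q)^2$ is exactly what produces the $-K$ block when $q\equiv 3$ and the $+K$ block when $q\equiv 1$. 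Next I would evaluate the inner block using Proposition~\ref{prop:basicKloostermanIdentities}: since $d=2$ is even the Legendre-symbol factor in part (2) is trivial, so $K(-j,-i/4)=K(1,ij/4)=K_{i,j}$ whenever $j\neq 0$; combining parts (1), (2), (4) gives $K(0,-i/4)=K(-i/4,0)=K(1,0)=-1$ for $i\neq 0$; and part (3) gives $K(0,0)=q-1$. Together with the $-\delta_{j,0}$ correction these produce the entries $q-2$ at position $(0,0)$, the value $-1$ along the $0$-row over $\mathbb{F}_q^\times$, and the value $-2$ down the $0$-column over $\mathbb{F}_q^\times$ in the case $q\equiv 1$.

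I would then record the $d=2$ sphere sizes: always $|S_{\bar 0}|=1$; when $q\equiv 3\pmod 4$ we have $0\notin\Delta$ and the norm-map computation in $\mathbb{F}_q[i]$ given earlier shows $|S_j|=q+1$ for $j\in\mathbb{F}_q^\times$; when $q\equiv 1\pmod 4$, $-1$ is a square, $x^2+y^2$ splits over $\mathbb{F}_q$, and a direct count gives $|S_0|=2q-2$ and $|S_j|=q-1$ for $j\in\mathbb{F}_q^\times$. Filling the top row of $\mathbb{P}$ with the sphere sizes, the left column with $1$'s, and the remaining block with the values just computed yields precisely the two displayed matrices in the stated ordering $\bar 0<0<\mathbb{F}_q^\times$.

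For the $Q$-matrix, Theorem~\ref{thm:PQmatrixmain} gives $Q=D^{-1}\mathbb{P}^T D$ with $D=\operatorname{diag}(|S_t|)_{t\in\Delta}$, so the claim $Q=\mathbb{P}$ is equivalent to $|S_i|P_{i,j}=|S_j|P_{j,i}$ for all $i,j\in\Delta$, i.e.\ to $D\mathbb{P}$ being symmetric. I would check this block by block from the explicit matrices: the $\mathbb{F}_q^\times\times\mathbb{F}_q^\times$ block is $(q-1)$ (resp.\ $(q+1)$) times $\pm K$ and is symmetric because $K$ is, while the cross terms balance numerically, e.g.\ $1\cdot(2q-2)=(2q-2)\cdot 1$, $1\cdot(q-1)=(q-1)\cdot 1$, and $(2q-2)(-1)=(q-1)(-2)$. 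Hence $Q=\mathbb{P}$, and substituting this into $\mathbb{P}Q=q^d\mathbb{I}=q^2\mathbb{I}$ from Theorem~\ref{thm:PQmatrixmain} gives $\mathbb{P}^2=q^2\mathbb{I}$ in both cases.

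I do not expect a genuine obstacle here; the argument is essentially bookkeeping. The steps that need the most care are pinning down the sphere sizes — especially $|S_0|=2q-2$ in the split case $q\equiv 1\pmod 4$ — and tracking the $\epsilon(q)^2$ sign together with the $-\delta_{j,0}$ correction, so that the $K$-block and the second-row and second-column entries ($q-2$, $-1$, $-2$) come out with the right signs.
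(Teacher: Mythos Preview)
Your proposal is correct and follows essentially the same route as the paper: the paper derives the proposition in the paragraphs immediately preceding it by specializing Theorem~\ref{thm:PQmatrixmain} to $d=2$, reading off the entries via the Kloosterman identities of Proposition~\ref{prop:basicKloostermanIdentities} and the known sphere sizes, and then invoking $Q=D^{-1}\mathbb{P}^T D$ to verify $\mathbb{P}=Q$, whence $\mathbb{P}^2=q^2\mathbb{I}$. Your reformulation of $Q=\mathbb{P}$ as the symmetry of $D\mathbb{P}$ and the explicit block-by-block check are a bit more detailed than what the paper writes, but the argument is the same.
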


The case when $d \geq 3$ is a bit trickier as the dimension of spheres can vary more. In general, for any $\lambda \in \mathbb{F}_q^{\times}$, scaling by $\lambda$ gives a bijection between the sphere of 
radius $j$ about the origin and the sphere of radius $\lambda^2 j$ about the origin. Thus there are in general 4 distinct sizes of spheres $|S_{\hat{0}}|=1$, $|S_0|$, $|S_{Sq}|$ and $|S_{Sq^c}|$ 
where $|S_{Sq}|$ is the common size of $S_j$ when $j$ is a nonzero square in $\mathbb{F}_q$ and $|S_{Sq^c}|$ is the common size of $S_j$ when $j$ is a nonsquare in $\mathbb{F}_q$.
As these spheres partition $\mathbb{F}_q^d$, we have $$q^d=1+|S_0| + \frac{q-1}{2}|S_{Sq}| + \frac{q-1}{2} |S_{Sq^c}|$$ for all integers $d \geq 3$ and odd prime powers $q$. 

Pick a primitive generator $\theta$ of the multiplicative group of $\mathbb{F}_q$ (which is a cyclic group of order $q-1$). Then $\theta^j$ is a square in $\mathbb{F}_q$ if and only if $j$ is even and so the Legendre symbol $\binom{\theta^j}{q}$ is equal to $(-1)^j$ for all integers $j$. We order the distance set $\Delta=\{ \bar{0} \} \cup \mathbb{F}_q$ via $\bar{0} < 0 < \theta^0 < \theta^1 < \theta^2 < \dots < \theta^{q-2}$. Note $4=\theta^s$ for unique integer $s$ with $0 \leq s \leq q-2$.

We define the $(q-1) \times (q-1)$ matrix $\tilde{\mathbb{K}}_d$ (whose rows and columns are indexed by $\{0,1,\dots, q-2\}$ in the natural order) via

\begin{align*}
(\tilde{\mathbb{K}}_d)_{i,j} 
&= \epsilon(q)^d \tilde{K}_d(-\theta^j, -\frac{\theta^i}{4}) \\
&=\epsilon(q)^d \binom{-\theta^j}{q}^d \tilde{K}_d(1,\frac{\theta^i\theta^j}{4})\\
&=\epsilon(q)^d\binom{-1}{q}^d (-1)^{jd}\tilde{K}_d(1,\theta^{i+j-s})\\
\end{align*}

where $\tilde{K}(a,b)$ are the (twisted) Kloosterman sums. The second equality follows from part (2) of Proposition~\ref{prop:basicKloostermanIdentities}. 

This matrix is always a real matrix from the earlier discussion about twisted Kloosterman sums. When $d$ is even, it is symmetric and circulant. Recall, a circulant matrix is one  such that each row of the matrix is the cyclic shift of the row above it, one notch to the left. When $d$ is odd, it is no longer circulant or symmetric but is signed-circulant, i.e., each row is the negative of the cyclic shift of the row above it, one notch to the left.

Using these conventions and Theorem~\ref{thm:PQmatrixmain}, the following proposition follows by a simple computation with the help of the identities in Proposition~\ref{prop:basicKloostermanIdentities}: 

\begin{proposition}
Let $\mathbb{F}_q$ be a finite field of odd characteristic and consider the Euclidean Association scheme of dimension $d \geq 3$. Fix a primitive generator $\theta$ of $\mathbb{F}_q^{\times}$, then $4=\theta^s$ for unique integer $0 \leq s \leq q-2$.

Let $(\tilde{\mathbb{K}}_d)_{i,j} =\epsilon(q)^d\binom{-1}{q}^d (-1)^{jd}\tilde{K}_d(1,\theta^{i+j-s})$ whose rows and columns are indexed by $\{0,\dots, q-2\}$. It is a real, symmetric and circulant matrix when $d$ is even and it is a real, signed-circulant matrix when $d$ is odd.

Let $\mathfrak{1}$ be the $(q-1)$ dimensional column vector of all ones and let 
$\hat{\eta}$ tbe the $(q-1)$ dimensional row vector whose entries alternate between 
$|S_{Sq}|$ and $|S_{Sq^c}|$, starting with the former. Let $\hat{\mu}$ be the $(q-1)$ dimensional column vector whose entries are $(-1)^{id}$ as $i$ ranges from $0$ to $q-2$. (So it always starts at $1$ and alternates sign if $d$ is odd, but is the all one vector when $d$ is even).

With respect to the ordering $\bar{0} < 0 < \theta^0 < \theta^1 < \theta^2 < \dots < \theta^{q-2}$ of the distance set $\Delta$, the $P$ matrix of the Euclidean association scheme is a $(q+1)\times(q+1)$ matrix given by: 
$$\mathbb{P}=\begin{bmatrix} 1 & |S_0| & \hat{\eta} \\ 1 & \alpha & \beta\hat{\mu}^T \\ \mathfrak{1} & \beta\hat{\mu}-\mathfrak{1} & q^{\frac{d}{2}-1}\tilde{\mathbb{K}}_d   \end{bmatrix}$$
where $\alpha=-1+q^{\frac{d}{2}-1}\epsilon(q)^d (q-1)\frac{(-1)^d+1}{2}$ and 
$\beta=-q^{\frac{d}{2}-1}\epsilon(q)^d$ when $d$ even, $\beta=\binom{-1}{q} q^{\frac{d-1}{2}}\epsilon(q)^{d+1}$ when $d$ odd. 

The $Q$ matrix has $Q_{i,j}=\frac{|S_j|}{|S_i|}P_{j,i}$. Due to varying sizes of spheres when $d \geq 3$, one no longer has $Q=P$ in general.

\end{proposition}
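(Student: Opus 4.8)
The plan is to obtain each block of $\mathbb{P}$ directly from Theorem~\ref{thm:PQmatrixmain} under the chosen ordering $\bar{0} < 0 < \theta^0 < \theta^1 < \dots < \theta^{q-2}$ of $\Delta$, and then to simplify the resulting (twisted) Kloosterman values using Proposition~\ref{prop:basicKloostermanIdentities}. Accordingly I would partition both the rows and the columns of $\mathbb{P}$ into the three index blocks $\{\bar{0}\}$, $\{0\}$ and $\{\theta^0,\dots,\theta^{q-2}\}$, treat the nine resulting blocks in turn, and keep careful track of the fact that the off-diagonal correction $-\delta_{j,0}$ in the formula of Theorem~\ref{thm:PQmatrixmain} only touches the column indexed by $0$.

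First I would handle the row and column indexed by $\bar{0}$. Theorem~\ref{thm:PQmatrixmain} gives $\mathbb{P}_{\bar{0},j}=|S_j|$ and $\mathbb{P}_{i,\bar{0}}=1$ for all $i,j$; since scaling by $\lambda\in\mathbb{F}_q^{\times}$ carries $S_j$ bijectively onto $S_{\lambda^2 j}$, the size $|S_{\theta^i}|$ depends only on the parity of $i$, equal to $|S_{Sq}|$ when $i$ is even and to $|S_{Sq^c}|$ when $i$ is odd, which produces the top row $(1,\ |S_0|,\ \hat\eta)$ and the left column $(1,\ 1,\ \mathfrak{1})^T$. Next I would compute the row $i=0$: for $j\neq\bar{0}$ one has $\mathbb{P}_{0,j}=-\delta_{j,0}+q^{d/2-1}\epsilon(q)^d\,\tilde K_d(-j,0)$; evaluating $\tilde K_d(0,0)$ by part (3) of Proposition~\ref{prop:basicKloostermanIdentities} yields $\mathbb{P}_{0,0}=\alpha$, while for $j=\theta^i$ parts (2) and (4) give $\tilde K_d(-\theta^i,0)=\binom{-1}{q}^d(-1)^{id}\tilde K_d(1,0)$ with $\tilde K_d(1,0)$ equal to $-1$ for $d$ even and to $\epsilon(q)\sqrt q$ for $d$ odd, and collecting constants turns this into $\beta\hat\mu_i$ with the stated $\beta$ in each parity (the odd case absorbing $\sqrt q$ into $q^{(d-1)/2}$ and raising $\epsilon(q)$ to $\epsilon(q)^{d+1}$). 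For the column $j=0$ below the diagonal I would run the mirror computation: for $i=\theta^k$, $\mathbb{P}_{\theta^k,0}=-1+q^{d/2-1}\epsilon(q)^d\,\tilde K_d(0,-\theta^k/4)$, and applying (1) then (2), together with $\binom{4}{q}=1$ and $\binom{x}{q}=\binom{1/x}{q}$, reduces $\tilde K_d(-\theta^k/4,0)$ to $\binom{-1}{q}^d(-1)^{kd}\tilde K_d(1,0)$ exactly as before, giving $\beta\hat\mu_k-1$; so that block is $\beta\hat\mu-\mathfrak{1}$.

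It then remains to treat the lower-right $(q-1)\times(q-1)$ block, where $i=\theta^a$ and $j=\theta^b$ with $j\neq 0$ so the delta term vanishes: by Theorem~\ref{thm:PQmatrixmain} and the definition of $\tilde{\mathbb{K}}_d$ one has $\mathbb{P}_{\theta^a,\theta^b}=q^{d/2-1}\epsilon(q)^d\,\tilde K_d(-\theta^b,-\theta^a/4)=q^{d/2-1}(\tilde{\mathbb{K}}_d)_{a,b}$, which is immediate. The claimed structure of $\tilde{\mathbb{K}}_d$ comes from its rewritten form $(\tilde{\mathbb{K}}_d)_{i,j}=\epsilon(q)^d\binom{-1}{q}^d(-1)^{jd}\tilde K_d(1,\theta^{i+j-s})$: the product $\epsilon(q)^d\tilde K_d$ is real by the remark following Theorem~\ref{thm: Kloostermansareeigenvalues}; when $d$ is even the entry depends only on $i+j$ modulo $q-1$, so the matrix is symmetric and circulant; when $d$ is odd the surviving factor $(-1)^j$ makes row $i+1$ the negative of the one-notch left cyclic shift of row $i$, i.e.\ signed-circulant, and destroys symmetry. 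Finally, $Q_{i,j}=\frac{|S_j|}{|S_i|}P_{j,i}$ is read off from $Q=D^{-1}\mathbb{P}^T D$ in Theorem~\ref{thm:PQmatrixmain}, and $Q\neq\mathbb{P}$ in general precisely because for $d\geq 3$ the four sphere sizes $1,|S_0|,|S_{Sq}|,|S_{Sq^c}|$ are not all equal, unlike the $d=2$ case where $\mathbb{P}=Q$.

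The only real difficulty is bookkeeping rather than ideas: one must keep the Legendre-symbol factors $\binom{-1}{q}^d$, $(-1)^{id}$ and $\binom{4}{q}=1$ and the powers of $\epsilon(q)$ and $\sqrt q$ correctly separated across the even/odd-$d$ split, and remember that the $-\delta_{j,0}$ term contributes the $-1$ appearing in $\alpha$ and in $\beta\hat\mu-\mathfrak{1}$ but nothing to the lower-right block.
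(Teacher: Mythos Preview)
Your proposal is correct and follows exactly the route indicated in the paper, which states only that the proposition ``follows by a simple computation'' from Theorem~\ref{thm:PQmatrixmain} together with the identities in Proposition~\ref{prop:basicKloostermanIdentities}. You have simply carried out that computation block by block, with the bookkeeping handled correctly.
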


The above proposition also works for the $d=2$ case though in the case $q=3 \text{ mod } 4$ one should throw out the 2nd row and column as the distance $0$ does not occur.

\section{Equidistribution}

Through work of Kloosterman and A. Weil, for any odd prime power $q$ and $a \in \mathbb{F}_q^{\times}$, it is known that the Kloosterman numbers 
$K_q(1,a)=\sum_{x \in \mathbb{F}_q^{\times}} \chi(x + \frac{a}{x})$ satisfy
$$
K_q(1,a)=\sqrt{q}(e^{i \theta_{q,a}} + e^{-i \theta_{q,a}})=2\sqrt{q}\cos(\theta_{q,a})
$$
for a unique "Kloosterman" angle $\theta_{q,a} \in [0, \pi]$.

Using the sophisticated tools of lisse sheafs and etale cohomology, N. Katz (slightly simplified proofs later by Adolphson), proved the "vertical equidistribution" of these numbers that states that as $q \to \infty$, the distribution of these angles approaches the Sato-Tate measure on $[0,\pi]$. We discuss this more carefully next.

\begin{definition}[Sato-Tate measure]
The Sato-Tate Borel probability measure $\mu_{ST}$ on $[0,\pi]$ is given by the condition
$$
\mu_{ST}([a,b]) = \frac{2}{\pi} \int_a^b sin^2(\theta) d\theta
$$
for all $0\leq a < b \leq \pi$.

$\mu_{ST}$ is characterized as the unique Borel measure being absolutely continuous with respect to Lebesgue measure with Radon-Nikodym derivative $\frac{2}{\pi} sin^2(\theta)$. 

For any continuous $f: [0, \pi] \to \mathbb{R}$, we write $$
E_{ST}[f] = \frac{2}{\pi} \int_0^{\pi} f(\theta) sin^2(\theta) d\theta$$
for the expectation of $f$ with respect to this probability measure.
\end{definition}

\begin{definition}[Kloosterman angle average]
For any odd prime power $q$, let $\theta_{a,q}$ be the Kloosterman angles associated to the Kloosterman sums $K(1,a), a \in \mathbb{F}_q^{\times}$. Given a continuous function 
$f: [0, \pi] \to \mathbb{R}$, we let 
$$
E_{K,q}[f] = \frac{1}{q-1} \sum_{a \in \mathbb{F}_q^{\times}} f(\theta_{q,a})
$$
be the "sample average" of $f$ over these Kloosterman angles.
\end{definition}

We are now ready to state the deep vertical equidistribution theorem of N. Katz (\cite{NK}):

\begin{theorem}[Vertical Equidistribution of Kloosterman sums]
For any sequence of odd prime powers $q_n \to \infty$, and any continuous function $f: [0, \pi] \to \mathbb{R}$, we have 
$$
\lim_{n \to \infty} E_{K,q_n}[f] = E_{ST}[f].
$$
\end{theorem}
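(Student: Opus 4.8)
The plan is to deduce the theorem from the classical equidistribution machinery combined with the cohomological estimates of Deligne and the monodromy computation of Katz: first reduce weak convergence of the Kloosterman-angle measures to the convergence of a dense family of ``moments'', then identify those moments with sums of Frobenius traces on symmetric powers of the Kloosterman sheaf, and finally bound those sums via the Grothendieck--Lefschetz trace formula together with Weil~II and big monodromy.

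First I would carry out the Weyl-criterion reduction. It suffices to prove $E_{K,q_n}[f]\to E_{ST}[f]$ only for $f$ ranging over a family whose linear span is uniformly dense in $C([0,\pi])$. Since $\theta\mapsto\cos\theta$ is a homeomorphism of $[0,\pi]$ onto $[-1,1]$, Stone--Weierstrass lets me take $f(\theta)=U_n(\cos\theta)$ for $n\geq 0$, where $U_n$ is the $n$th Chebyshev polynomial of the second kind, so that $U_n(\cos\theta)=\sin((n+1)\theta)/\sin\theta$. These are precisely the orthonormal polynomials for the weight $\tfrac{2}{\pi}\sqrt{1-t^2}\,dt$ on $[-1,1]$, which is the pushforward of $\mu_{ST}$ under $\theta\mapsto\cos\theta$; hence $E_{ST}[U_n(\cos\theta)]=\delta_{n,0}$, and the theorem becomes the assertion that for each fixed $n\geq 1$,
\[
\frac{1}{q-1}\sum_{a\in\mathbb{F}_q^{\times}}\frac{\sin((n+1)\theta_{q,a})}{\sin\theta_{q,a}}\longrightarrow 0\qquad(q\to\infty).
\]

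Next I would pass to \'etale cohomology. Fix a prime $\ell\neq p$ and let $\mathrm{Kl}_2$ be the lisse rank-$2$ $\overline{\mathbb{Q}}_\ell$-sheaf on $\mathbb{G}_m/\mathbb{F}_q$ cut out by the Kloosterman sums: it is pure of weight $1$, tame with unipotent monodromy at $0$, and totally wild at $\infty$ with all breaks equal to $1/2$ (so $\mathrm{Swan}_\infty(\mathrm{Kl}_2)=1$), and writing its Frobenius eigenvalues at $a$ as $\sqrt q\,e^{\pm i\theta_{q,a}}$ gives (up to an unimportant sign) $\mathrm{tr}(\mathrm{Frob}_a\mid\mathrm{Sym}^n\mathrm{Kl}_2)=q^{n/2}\,\sin((n+1)\theta_{q,a})/\sin\theta_{q,a}$. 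The Grothendieck--Lefschetz formula then rewrites the sum above in terms of $H^i_c(\mathbb{G}_m\otimes\overline{\mathbb{F}}_q,\mathrm{Sym}^n\mathrm{Kl}_2)$, $i=0,1,2$. Here $H^0_c=0$ because $\mathbb{G}_m$ is a non-proper curve, and $H^2_c$ is the Tate twist of the space of geometric coinvariants of $\mathrm{Sym}^n\mathrm{Kl}_2$, which vanishes for $n\geq 1$: by Katz's theorem the geometric monodromy group of $\mathrm{Kl}_2$ is all of $\mathrm{SL}_2$, so $\mathrm{Sym}^n$ of the standard representation is irreducible and nontrivial and has no invariants or coinvariants. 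Thus only $H^1_c$ contributes. Since $\mathrm{Sym}^n\mathrm{Kl}_2$ is pure of weight $n$, Deligne's Weil~II bounds every Frobenius eigenvalue on $H^1_c$ by $q^{(n+1)/2}$, while the Euler--Poincar\'e (Grothendieck--Ogg--Shafarevich) formula on $\mathbb{G}_m$, using $H^0_c=H^2_c=0$ and tameness at $0$, gives $\dim H^1_c=\mathrm{Swan}_\infty(\mathrm{Sym}^n\mathrm{Kl}_2)\leq (n+1)/2=:C(n)$, a bound independent of $q$. Combining, the left-hand side of the displayed limit is $q^{-n/2}(q-1)^{-1}\bigl|\sum_a \mathrm{tr}(\mathrm{Frob}_a\mid\mathrm{Sym}^n\mathrm{Kl}_2)\bigr|\leq C(n)\sqrt q/(q-1)\to 0$, which is exactly the moment estimate, and Step~1's reduction completes the proof.

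The hard part is not in any of the bookkeeping above --- the Weyl/Stone--Weierstrass reduction, the Chebyshev identities, and the Euler--Poincar\'e dimension count are routine --- but in three external inputs, none of which is developed in the present paper: the construction and local analysis at $0$ and $\infty$ of the Kloosterman sheaf; Deligne's Weil~II purity theorem, which supplies the square-root cancellation in $H^1_c$; and above all Katz's determination that the geometric monodromy of $\mathrm{Kl}_2$ is the full $\mathrm{SL}_2$, which is precisely what forces $H^2_c=0$ and thereby produces the correct main term $\delta_{n,0}=E_{ST}[U_n(\cos\theta)]$. This is why the statement is cited as a black box; an essentially equivalent route would be to estimate the raw power sums $\sum_{a}K(1,a)^{n}$ directly by the same cohomological argument, but it relies on exactly the same deep ingredients.
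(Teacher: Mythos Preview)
Your sketch is a correct outline of Katz's proof, but note that the paper does not give any proof of this theorem at all: it is stated and attributed to Katz~\cite{NK} (with a mention of Adolphson's simplifications) and then used as a black box to derive the asymptotics of the Kloosterman moments $M_{q,\ell}$. You yourself recognize this in your final paragraph. So there is nothing in the paper to compare your argument against; what you have written is essentially the standard cohomological argument --- Weyl/Stone--Weierstrass reduction to Chebyshev-$U$ moments, identification with traces on $\mathrm{Sym}^n\mathrm{Kl}_2$, vanishing of $H^0_c$ and $H^2_c$ via big monodromy, and the Weil~II bound on $H^1_c$ with dimension controlled by Grothendieck--Ogg--Shafarevich --- and the three deep inputs you isolate (construction and local analysis of $\mathrm{Kl}_2$, Weil~II purity, and the $\mathrm{SL}_2$ monodromy) are exactly the ones the paper declines to develop.
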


Next we will discuss some graph theoretical equivalents to the equidistribution theorem. First some basic trigonometric facts will be collected:

\begin{proposition}
\label{pro:trig}
For any nonnegative integers $m, n$ we have: \\
(1) $\frac{2}{\pi} \int_{0}^{\pi} \cos(m \theta)\cos(n \theta)d\theta = \delta_{m,n}+\delta_{m,n}\delta_{m,0}.$\\
(2) $E_{ST}[\cos(n\theta)]=0$ if $n \neq 0,2$. $E_{ST}[\cos(2\theta)]=-\frac{1}{2}$. \\
The Sato-Tate probability measure is characterized by these expectations amongst continuous probability measures on $[0, \pi]$. \\
(3) For every $\ell \geq 1$ we have $$2^{2\ell-1}\cos^{2\ell}(\theta)=\sum_{k=0}^{\ell-1} \binom{2\ell}{k} \cos((2\ell-2k)\theta) + \frac{1}{2} \binom{2\ell}{\ell}.$$
(4) For every $\ell \geq 1$ we have $$2^{2\ell}\cos^{2\ell+1}(\theta)=\sum_{k=0}^{\ell} \binom{2\ell+1}{k} \cos((2\ell+1-2k)\theta).$$ \\
(5) For positive integer $m$ we have $E_{ST}[\cos^{2m+1}(\theta)]=0$ and 
$$E_{ST}[cos^{2m}(\theta)]=\frac{1}{2^{2m-1}} (\frac{-1}{2} \binom{2m}{m-1} + \frac{1}{2} \binom{2m}{m})=\frac{1}{2^{2m}(m+1)}\binom{2m}{m}.$$
\end{proposition}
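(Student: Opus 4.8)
The plan is to treat this as a purely computational lemma: the five parts build on one another, so I would establish them in the stated order, and the only recurring idea is to push everything through the de Moivre/binomial expansion of $(2\cos\theta)^n$ together with the elementary orthogonality of the functions $\cos(k\theta)$ on $[0,\pi]$.

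For (1), I would use the product-to-sum identity $\cos(m\theta)\cos(n\theta)=\tfrac12[\cos((m+n)\theta)+\cos((m-n)\theta)]$ and the fact that $\int_0^\pi\cos(k\theta)\,d\theta=\pi\delta_{k,0}$; since $m,n\ge 0$, the summand $\cos((m+n)\theta)$ contributes only when $m=n=0$ while $\cos((m-n)\theta)$ contributes exactly when $m=n$, which produces the two Kronecker deltas. For (2), I would write $\sin^2\theta=\tfrac12(1-\cos 2\theta)$, so that $E_{ST}[\cos(n\theta)]=\tfrac1\pi\int_0^\pi\cos(n\theta)\,d\theta-\tfrac1\pi\int_0^\pi\cos(n\theta)\cos(2\theta)\,d\theta=\delta_{n,0}-\tfrac12\delta_{n,2}$ by part (1); specializing gives $E_{ST}[1]=1$ (confirming $\mu_{ST}$ is a probability measure), $E_{ST}[\cos(2\theta)]=-\tfrac12$, and $0$ in all other cases. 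The characterization clause then follows because the span of $\{\cos(n\theta):n\ge 0\}$ is a point-separating subalgebra of the continuous functions on $[0,\pi]$ (under $x=\cos\theta$ it is exactly the algebra of polynomials on $[-1,1]$), hence dense by Stone--Weierstrass, so any finite Borel measure on $[0,\pi]$ is determined by the numbers $\int\cos(n\theta)\,d\mu$.

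For (3) and (4), I would expand $(2\cos\theta)^n=(e^{i\theta}+e^{-i\theta})^n=\sum_{k=0}^n\binom nk e^{i(n-2k)\theta}$ and fold the sum by pairing the index $k$ with $n-k$, using $\binom nk=\binom n{n-k}$ and $e^{i(n-2k)\theta}+e^{-i(n-2k)\theta}=2\cos((n-2k)\theta)$: for $n=2\ell$ this yields the $\ell$ paired terms $k=0,\dots,\ell-1$ together with the unpaired central term $\binom{2\ell}{\ell}$, and for $n=2\ell+1$ it yields the $\ell+1$ paired terms $k=0,\dots,\ell$ with no central term; dividing by $2$ gives the two stated identities. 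Finally, for (5) I would apply $E_{ST}$ to the identities of (3) and (4): in (4) every frequency $2\ell+1-2k$ is odd, hence different from $0$ and $2$, so part (2) forces $E_{ST}[\cos^{2m+1}\theta]=0$; in (3) the only frequency $2m-2k$ landing in $\{0,2\}$ is $k=m-1$ (contributing $-\tfrac12$), while the constant $\tfrac12\binom{2m}{m}$ passes through unchanged, giving $2^{2m-1}E_{ST}[\cos^{2m}\theta]=-\tfrac12\binom{2m}{m-1}+\tfrac12\binom{2m}{m}$; the closed form $\frac{1}{2^{2m}(m+1)}\binom{2m}{m}$ then comes from the Catalan simplification $\binom{2m}{m}-\binom{2m}{m-1}=\bigl(1-\tfrac{m}{m+1}\bigr)\binom{2m}{m}=\tfrac{1}{m+1}\binom{2m}{m}$.

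I do not expect a genuine obstacle here; the content is routine manipulation. The one point that needs slightly more than algebra is the density argument underlying the "characterization" clause of (2), and the only thing demanding care in the bookkeeping is tracking how the unpaired central binomial term in the even-degree case is responsible both for the extra $\delta_{m,n}\delta_{m,0}$ in (1) and for the competing $\pm\tfrac12$ signs that combine into the Catalan number in (5).
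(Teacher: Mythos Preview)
Your proposal is correct and follows essentially the same route as the paper: product-to-sum for (1), the identity $\sin^2\theta=\tfrac12(1-\cos 2\theta)$ plus Stone--Weierstrass for (2), the binomial expansion of $(e^{i\theta}+e^{-i\theta})^n$ with the $k\leftrightarrow n-k$ folding for (3)--(4), and then feeding these into (2) for (5). Your write-up is in fact a bit more explicit than the paper's (you spell out the Catalan simplification $\binom{2m}{m}-\binom{2m}{m-1}=\tfrac{1}{m+1}\binom{2m}{m}$ and identify which frequency survives), but the underlying argument is identical.
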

\begin{proof}
Proof of (1): Follows from the trigonometric identity
$$
\cos(mx)\cos(nx) = \frac{1}{2}\cos((m+n)x) + \frac{1}{2}\cos((m-n)x)
$$
and simple integration.

Proof of (2): Follows from (1), once one notes that $\sin^2(\theta)=\frac{1-\cos(2\theta)}{2}$. The characterization of Sato-Tate measure from these expectations follows from the density of the algebra generated by the $cos(n\theta)$ in the ring of continuous real valued functions on $[0,\pi]$ (with the supremum metric) which itself follows from a generalized Stone-Weierstrass theorem, together with the Riesz representation theorem. \\

Proof of (3): Write $2\cos(\theta)=e^{i\theta} + e^{-i\theta}$ and raise both sides to the $2\ell$ power to conclude 
$$
2^{2\ell} \cos^{2\ell}(\theta)=\sum_{k=0}^{2\ell} \binom{2\ell}{k} (e^{i\theta})^{2\ell-k} (e^{-i\theta})^k = \sum_{k=0}^{2\ell} \binom{2\ell}{k} (e^{i\theta})^{2\ell-2k}.
$$
Finish by grouping the $k=j$ and $k=2\ell-j$ terms for each $j$.

Proof of (4): Follows the same procedure as for (3).

Proof of (5): Follows from using (3),(4) in (2).
\end{proof}

\begin{definition}[Kloosterman moments]
For any odd prime power $q$, and positive integer $\ell$, let 
$$
M_{q,\ell} = \sum_{a \in \mathbb{F}_q^{\times}} K_q(1,a)^\ell = 2^{\ell} q^{\frac{\ell}{2}}(q-1) E_{K,q}[cos^{\ell}(\theta)]
$$
be the $\ell$th Kloosterman moment.
\end{definition}

Using little-oh notation and the trigonometric identities in Proposition~\ref{pro:trig}, we find that the vertical equidistribution of Kloosterman sums is equivalent to establishing the following behavior for Kloosterman moments:

$$
M_{q, 2\ell+1} = o(q^{\ell+1.5}) 
$$
and
$$
M_{q,2\ell} =  q^{\ell+1} \frac{1}{\ell+1}\binom{2\ell}{\ell} (1 + o(1))
$$
as $q \to \infty$.

The quantity $C_{\ell}=\frac{1}{\ell+1}\binom{2\ell}{\ell}$ in the last limit is the $\ell$th Catalan number which occurs frequently in combinatorics. It is for example the number of ways to bracket a given $(\ell+1)$-fold product in terms of pairwise multiplications.

Recall, in a graph, two edges are incident if they share a common vertex and a walk of length $\ell$ is a sequence of $\ell$ edges, where each edge is incident to the previous one. If $\mathbb{A}$ is the adjacency matrix of the graph with respect to some vertex ordering then the $(v,w)$-entry of $\mathbb{A}^{\ell}$ is the number of walks of length $\ell$ from vertex $v$ to vertex $w$ in the graph. Such a walk is called closed if $v=w$. Thus $Trace(\mathbb{A}^{\ell})$ is the number of closed walks of length $\ell$ in the graph. 

Let $q$ be an odd prime power, $q= 3 \text{ mod } 4$. Fix any $t \in \mathbb{F}_q^{\times}$ then we have seen that the spectrum of $\mathbb{A}_t$, the adjacency matrix of the distance-t-graph for the plane $\mathbb{F}_q^2$ is 
the multiset $$\{\{ (q+1)^{(1)}, -K_q(1,a)^{(q+1)} | a \in \mathbb{F}_q^{\times} \} \}$$ 
where the superscripts indicate the multiplicity of each eigenvalue in the multiset.

Thus
$$
Trace(\mathbb{A}_t^{\ell}) = (q+1)^{\ell} + (q+1)(-1)^{\ell}\sum_{a \in \mathbb{F}_q^{\times}}
 K_q(1,a)^{\ell} = (q+1)^{\ell} + (q+1)(-1)^{\ell}M_{q,\ell}.
$$

On the other hand, when $q=1 \text{ mod } 4$, the spectrum of $\mathbb{A}_t$ is 
$$\{\{ (q-1)^{(1)}, (-1)^{(2q-2)}, K_q(1,a)^{(q-1)} | a \in \mathbb{F}_q^{\times} \} \}$$ 
and so 
$$
Trace(\mathbb{A}_t^{\ell}) = (q-1)^{\ell} + (2q-2)(-1)^{\ell} + (q-1)\sum_{a \in \mathbb{F}_q^{\times}}
 K_q(1,a)^{\ell} = (q-1)^{\ell} + (2q-2)(-1)^{\ell} + (q-1)M_{q,\ell}.
$$

We record these results in the next proposition:

\begin{proposition}
\label{pro:closedwalks}
Let $q$ be an odd prime power and $t \in \mathbb{F}_q^{\times}$ and $\mathbb{A}_t$ be the adjacency matrix of the distance $t$ graph for the plane $\mathbb{F}_q^2$. 
Then if $q = 3 \text{ mod } 4$, we have the number of closed walks of length $\ell$ is:
$$
Trace(\mathbb{A}_t^{\ell}) = (q+1)^{\ell} + (q+1)(-1)^{\ell}M_{q,\ell}.
$$
If $q=1 \text{ mod } 4$ we have:
$$
Trace(\mathbb{A}_t^{\ell})  = (q-1)^{\ell} + (2q-2)(-1)^{\ell} + (q-1)M_{q,\ell}
$$
where $M_{q,\ell}$ is the $\ell$th Kloosterman moment.

Vertical equidistribution of Kloosterman angles is equivalent to
$$
M_{q, 2\ell+1} = o(q^{\ell+1.5}) 
$$
and
$$
M_{q,2\ell} =  q^{\ell+1} \frac{1}{\ell+1}\binom{2\ell}{\ell} (1 + o(1))
$$
as $q \to \infty$.

\end{proposition}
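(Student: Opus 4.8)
\emph{Proof strategy.} The plan is to separate the two trace formulas, which are bookkeeping, from the claimed equivalence, which is where the real content lies. The trace formulas follow at once from Theorem~\ref{thm: Kloostermansareeigenvalues}: that theorem, together with the observation that each weight space $W_j$ with $j\in\mathbb{F}_q^{\times}$ has dimension $q+1$ when $q\equiv 3\bmod 4$ (via the norm map $N\colon\mathbb{F}_q[i]^{\times}\to\mathbb{F}_q^{\times}$) and dimension $q-1$ when $q\equiv 1\bmod 4$, while the extra weight space $W_0$ present when $q\equiv 1\bmod 4$ has dimension $|S_0|=2q-2$, pins down the complete multiset of eigenvalues of $\mathbb{A}_t$ displayed just above the proposition. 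Since $Trace(\mathbb{A}_t^{\ell})$ is the sum of the $\ell$-th powers of these eigenvalues counted with multiplicity and $j\mapsto tj/4$ is a bijection of $\mathbb{F}_q^{\times}$, collecting the Kloosterman terms yields the two formulas; the factor $\epsilon(q)^2=-1$ occurring when $q\equiv 3\bmod 4$ is exactly what produces the $(-1)^{\ell}$ in that case.

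For the equivalence I would work throughout with the identity $M_{q,\ell}=2^{\ell}q^{\ell/2}(q-1)\,E_{K,q}[\cos^{\ell}\theta]$ from the definition of the Kloosterman moments, the Sato--Tate moments $E_{ST}[\cos^{2\ell+1}\theta]=0$ and $E_{ST}[\cos^{2\ell}\theta]=\frac{1}{2^{2\ell}(\ell+1)}\binom{2\ell}{\ell}$ recorded in Proposition~\ref{pro:trig}(5), and the elementary estimate $q^{\ell/2}(q-1)=q^{\ell/2+1}(1+o(1))$ used to pass between the two normalizations. The forward implication is then immediate: assuming vertical equidistribution, apply the equidistribution theorem to the continuous test function $f(\theta)=\cos^{\ell}\theta$ to obtain $E_{K,q}[\cos^{\ell}\theta]\to E_{ST}[\cos^{\ell}\theta]$ as $q\to\infty$; multiplying by $2^{\ell}q^{\ell/2}(q-1)$ and inserting the Sato--Tate values gives $M_{q,2\ell+1}=o(q^{\ell+1.5})$ and $M_{q,2\ell}=q^{\ell+1}\frac{1}{\ell+1}\binom{2\ell}{\ell}(1+o(1))$.

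The converse is the heart of the argument and the step I expect to require genuine work. From the two moment asymptotics, dividing by $2^{\ell}q^{\ell/2}(q-1)$ recovers $E_{K,q}[\cos^{\ell}\theta]\to E_{ST}[\cos^{\ell}\theta]$ for every fixed integer $\ell\ge 0$, hence $E_{K,q}[p(\cos\theta)]\to E_{ST}[p(\cos\theta)]$ for every real polynomial $p$ by linearity. To upgrade this to $E_{K,q}[f]\to E_{ST}[f]$ for all continuous $f$ I would use that the polynomials in $\cos\theta$ form a subalgebra of $C([0,\pi])$ that contains the constants and separates points (since $\cos$ is injective on $[0,\pi]$), hence is uniformly dense by Stone--Weierstrass; an $\varepsilon/3$ argument, using that $E_{K,q}$ and $E_{ST}$ are both integration against probability measures so that $|E_{K,q}[g]|\le\|g\|_{\infty}$ uniformly in $q$, then closes the gap. (Alternatively one can argue via sequential compactness of the probability measures on the fixed interval $[0,\pi]$ together with the moment characterization of $\mu_{ST}$ in Proposition~\ref{pro:trig}(2).) This is the classical method of moments; it goes through cleanly here precisely because every measure in sight is a probability measure supported on the same fixed compact interval, so uniform polynomial approximation suffices and no tightness step is needed. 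The only remaining care is the routine juggling of the factors $2^{\ell}$, $q^{\ell/2}$ and $q-1$ versus $q$ when translating the little-oh statements.
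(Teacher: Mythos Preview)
Your proposal is correct and follows essentially the same route as the paper: the trace formulas come from the explicit spectrum of $\mathbb{A}_t$ obtained via Theorem~\ref{thm: Kloostermansareeigenvalues} (with the multiplicities you state), and the equivalence with vertical equidistribution is the method of moments combined with the trigonometric identities and the Stone--Weierstrass characterization recorded in Proposition~\ref{pro:trig}. The paper's own argument is the discussion immediately preceding the proposition together with the remark ``Using little-oh notation and the trigonometric identities in Proposition~\ref{pro:trig}\ldots''; you have simply made the converse direction (moments $\Rightarrow$ equidistribution) more explicit than the paper does, but the underlying mechanism is the same.
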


\begin{corollary}
For any odd prime power $q$, we have 
$M_{q,1}=1$ and $M_{q,2}=q^2-q-1$. 
Thus the equidistribution conditions hold for these moments.
\end{corollary}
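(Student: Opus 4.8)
The plan is to compute both moments directly from the definition $M_{q,\ell}=\sum_{a\in\mathbb{F}_q^{\times}}K_q(1,a)^{\ell}$, using only character orthogonality. The key device is to first extend the sum over $a$ to all of $\mathbb{F}_q$: since $K_q(1,0)=\sum_{x\in\mathbb{F}_q^{\times}}\chi(x)=-1$ by character orthogonality, we get $M_{q,\ell}=\bigl(\sum_{a\in\mathbb{F}_q}K_q(1,a)^{\ell}\bigr)-(-1)^{\ell}$, and the extended sum is easier because after interchanging the order of summation the inner sum $\sum_{a\in\mathbb{F}_q}\chi(ac)$ collapses to $q\,\delta_{c,0}$.

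First I would handle $\ell=1$. Expanding $\sum_{a\in\mathbb{F}_q}K_q(1,a)=\sum_{a\in\mathbb{F}_q}\sum_{x\neq 0}\chi(x)\chi(a/x)$ and interchanging summation gives $\sum_{x\neq 0}\chi(x)\sum_{a\in\mathbb{F}_q}\chi(a/x)$; for each fixed $x\neq 0$ the inner sum vanishes, so $\sum_{a\in\mathbb{F}_q}K_q(1,a)=0$ and hence $M_{q,1}=0-(-1)=1$. For $\ell=2$ I would expand $K_q(1,a)^2=\sum_{x,y\neq 0}\chi(x+y)\chi\bigl(a(\tfrac1x+\tfrac1y)\bigr)$, interchange summation, and observe that $\sum_{a\in\mathbb{F}_q}\chi\bigl(a(\tfrac1x+\tfrac1y)\bigr)$ equals $q$ when $\tfrac1x+\tfrac1y=0$ (equivalently $y=-x$, since $x,y\neq 0$) and $0$ otherwise. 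Thus $\sum_{a\in\mathbb{F}_q}K_q(1,a)^2=q\sum_{x\neq 0}\chi\bigl(x+(-x)\bigr)=q(q-1)$, and so $M_{q,2}=q(q-1)-(-1)^{2}=q^{2}-q-1$.

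Finally I would check the claimed equidistribution estimates. Setting $\ell=0$ in the odd-moment condition $M_{q,2\ell+1}=o(q^{\ell+1.5})$ requires $M_{q,1}=o(q^{1.5})$, which holds since $M_{q,1}=1$ is bounded; setting $\ell=1$ in the even-moment condition $M_{q,2\ell}=q^{\ell+1}\tfrac{1}{\ell+1}\binom{2\ell}{\ell}(1+o(1))$ requires $M_{q,2}=q^{2}(1+o(1))$, which holds since $q^2-q-1=q^2(1-q^{-1}-q^{-2})$. There is essentially no obstacle here; the only point requiring care is the bookkeeping of the degenerate terms, namely the $a=0$ contribution $K_q(1,0)^{\ell}=(-1)^{\ell}$ and the diagonal $y=-x$ on which the inner $a$-sum fails to vanish. (Alternatively, one could read off the same values from Proposition~\ref{pro:closedwalks} using that the distance-$t$ graph is loopless, so $\mathrm{Trace}(\mathbb{A}_t)=0$, and that $\mathrm{Trace}(\mathbb{A}_t^{2})=q^{2}|S_t|$ counts back-and-forth walks, but that route requires separating the cases $q\equiv 1,3\bmod 4$.)
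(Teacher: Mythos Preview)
Your proof is correct and takes a genuinely different route from the paper's. The paper argues via Proposition~\ref{pro:closedwalks}: it reads off $M_{q,1}$ from $\mathrm{Trace}(\mathbb{A}_1)=0$ (looplessness) and $M_{q,2}$ from $\mathrm{Trace}(\mathbb{A}_1^2)=q^2|S_1|$ (back-and-forth walks), treating $q\equiv 3\pmod 4$ explicitly and leaving $q\equiv 1\pmod 4$ to an analogous computation. You instead compute the moments directly from the definition by extending the $a$-sum to all of $\mathbb{F}_q$, interchanging summation, and collapsing the inner $a$-sum via additive character orthogonality. Your method is more elementary and self-contained (no spectral or graph-theoretic input), and it is uniform in $q$ with no case split; the paper's method, while slightly more roundabout, has the virtue of reinforcing the paper's central theme that Kloosterman moments and closed-walk counts in the distance-$t$ graph determine one another. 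You are aware of this, as your parenthetical remark shows.
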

\label{cor: KloosterMoments}
\begin{proof}
We prove the $q=3 \text{ mod } 4$ case with similar calculations working for the $q=1 \text{ mod } 4$ case. Setting $t=1$ in Proposition~\ref{pro:closedwalks}, we see $Trace(\mathbb{A}_1)=0$ as there are no closed walks of length one. Thus $(q+1)-(q+1)M_{q,1}=0$ giving 
$M_{q,1}=1$. A closed walk of length two consists of walking out using an edge and returning using the same edge and so the number of those is $q^2(q+1)$ as there are $q^2$ choices of initial vertices and $q+1$ many adjacent vertices to walk to.
Thus $q^2(q+1)=Trace(\mathbb{A}_1^2)=(q+1)^2 + (q+1)M_{q,2}$ yielding the stated formula for $M_{q,2}$. 
\end{proof}

\section{An application to the random walk in finite planes of odd order}

Fix $t \neq 0$ in $\mathbb{F}_q$ and let $\mathbb{A}_t$ be the adjacency matrix for the distance $t$-graph in the plane $\mathbb{F}_q^2$. Define $T_t=\frac{1}{|S_t|} \mathbb{A}_t$ to be the transition matrix of the corresponding random walk Markov chain. 
The $(i,j)$-entry of $T_t^{\ell}$ represents the probability of transitioning from the $i$th vertex to the $j$th vertex after $\ell$ steps in this Markov chain. As the $A_t$-graph is vertex transitive, the diagonal entries of $T_t^{\ell}$ are all equal as the probability that we return to vertex $i$ after $\ell$ steps, if we start at vertex $i$, will be independent of the vertex. Thus $(T_t^{\ell})_{i,i}=\frac{1}{q^2}Trace(T_t^{\ell})$ for any $i$. 

Using Bayesian conditioning, it follows that for any initial probability distribution $p_i$ on the vertices, the chance that you start and end at the same vertex after $\ell$ steps is 
$ \sum_{j=1}^{q^2} (T_t^{\ell})_{j,j}p_j= \frac{1}{q^2} Trace (T_t^{\ell})\sum_{j=1}^{q^2} p_j =\frac{1}{q^2|S_t|^{\ell}} Trace(A_t^{\ell})$ is the probability that you return to where you start after $\ell$ steps (it is independent of initiate state). Let us call this quantity the probability of return and denote it $R_{q,\ell,t}$. Now $|S_t|=q \pm 1$ where the sign depends if $q$ is $1$ or $3$ mod $4$ so we find that this probability is $R_{q,\ell,t}=\frac{1}{q^2(q \pm 1)^{\ell}} Trace(A_t^{\ell})$.

By Corollary~\ref{cor: KloosterMoments}, we see that if $q=3 \text{ mod } 4$, and positive $\ell$ we have 
$$
R_{q,\ell,t}=R_{q,\ell}=\frac{1}{q^2}(1 + \frac{(-1)^{\ell}}{(q+1)^{\ell-1}} M_{q,\ell})
$$
does not depend on $t \neq 0$. Note the first $\frac{1}{q^2}$ term is what one would expect if the location after $\ell$ steps were equally likely to be anywhere in the plane and the second term represents an arithmetic bias against that happening. Vertical equidistribution of Kloosterman sums, tells us furthermore that 
$$
R_{q, 2\ell} = \frac{1}{q^2} + \frac{q^{\ell-1}}{(\ell+1)(q+1)^{2\ell-1}}\binom{2\ell}{\ell} (1 + o(1)) = \frac{1}{q^2} + \frac{1}{q^{\ell}(\ell+1)}\binom{2\ell}{\ell}(1+o(1))
$$
and
$$
R_{q, 2\ell+1}=\frac{1}{q^2}(1 - \frac{1}{(q+1)^{2\ell}}o(q^{\ell+1.5}))=\frac{1}{q^2}(1+o(q^{1.5-\ell}))
$$
as $q \to \infty$ thru such prime powers.

Thus we have proven:
\begin{theorem}[Probability of Return in the Distance $t$ Random Walk in $\mathbb{F}_q$-planes]
\label{thm: Returnwalk}
Let $q$ be an odd prime, $q=3 \text{ mod } 4$. Let $R_{q,\ell,t}$ be the probability that you return to the same vertex after $\ell$ steps in the distance-$t$ walk where $t \neq 0$.
Then $R_{q,\ell, t}=R_{q,\ell}$ is independent of $t \neq 0$ and initial state. We have 
$$
R_{q,\ell,t}=R_{q,\ell}=\frac{1}{q^2}(1 + \frac{(-1)^{\ell}}{(q+1)^{\ell-1}} M_{q,\ell})
$$
Furthermore, as $q \to \infty$ we have: 
$$
R_{q, 2\ell} = \frac{1}{q^2} + \frac{q^{\ell-1}}{(\ell+1)(q+1)^{2\ell-1}}\binom{2\ell}{\ell} (1 + o(1)) = \frac{1}{q^2} + \frac{1}{q^{\ell}(\ell+1)}\binom{2\ell}{\ell}(1+o(1))
$$
and
$$
R_{q, 2\ell+1}=\frac{1}{q^2}(1 - \frac{1}{(q+1)^{2\ell}}o(q^{\ell+1.5}))=\frac{1}{q^2}(1+o(q^{1.5-\ell}))
$$

\end{theorem}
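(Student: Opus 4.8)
The plan is to assemble the stated formulas directly from three ingredients already in place: the spectral description of $\mathbb{A}_t$ for $\mathbb{F}_q^2$ when $q \equiv 3 \pmod 4$ (from Theorem~\ref{thm: Kloostermansareeigenvalues} and the dimension-2 $\mathbb{P}$-matrix calculation), the closed-walk count in Proposition~\ref{pro:closedwalks}, and the equidistribution asymptotics for Kloosterman moments in the same proposition. First I would recall that vertex-transitivity of the distance-$t$ Cayley graph forces every diagonal entry of $T_t^{\ell} = |S_t|^{-\ell}\mathbb{A}_t^{\ell}$ to equal $q^{-2}\,\mathrm{Trace}(T_t^{\ell})$, and then note by a one-line Bayesian averaging over any initial distribution that the probability of return is $R_{q,\ell,t} = \frac{1}{q^2 |S_t|^{\ell}} \mathrm{Trace}(\mathbb{A}_t^{\ell})$, independent of the starting state; this is exactly the discussion preceding the theorem, so it can be quoted.

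Next I would substitute $|S_t| = q+1$ (valid for all $t \neq 0$ when $q \equiv 3 \pmod 4$, as established in the norm-map example) together with the closed-walk formula $\mathrm{Trace}(\mathbb{A}_t^{\ell}) = (q+1)^{\ell} + (q+1)(-1)^{\ell} M_{q,\ell}$ from Proposition~\ref{pro:closedwalks}. Dividing through by $q^2(q+1)^{\ell}$ gives
$$
R_{q,\ell,t} = \frac{1}{q^2}\left(1 + \frac{(-1)^{\ell}}{(q+1)^{\ell-1}} M_{q,\ell}\right),
$$
which is the exact formula; in particular the right-hand side visibly does not depend on $t$, so the claimed independence is immediate. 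This already disposes of the first displayed equation of the theorem.

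For the asymptotics I would split on the parity of the step count. In the even case $\ell \mapsto 2\ell$, I substitute the equidistribution estimate $M_{q,2\ell} = q^{\ell+1}\frac{1}{\ell+1}\binom{2\ell}{\ell}(1+o(1))$ from Proposition~\ref{pro:closedwalks} into the exact formula, obtaining the middle term $\frac{q^{\ell-1}}{(\ell+1)(q+1)^{2\ell-1}}\binom{2\ell}{\ell}(1+o(1))$; then I rewrite $\frac{q^{\ell-1}}{(q+1)^{2\ell-1}} = \frac{1}{q^{\ell}}\cdot\frac{q^{2\ell-1}}{(q+1)^{2\ell-1}} = \frac{1}{q^{\ell}}(1+o(1))$ to reach the cleaner form $\frac{1}{q^2} + \frac{1}{q^{\ell}(\ell+1)}\binom{2\ell}{\ell}(1+o(1))$. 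In the odd case $\ell \mapsto 2\ell+1$, I substitute $M_{q,2\ell+1} = o(q^{\ell+1.5})$ and $(-1)^{2\ell+1} = -1$, giving the error term $\frac{1}{(q+1)^{2\ell}}\cdot o(q^{\ell+1.5})$; since $(q+1)^{-2\ell} = q^{-2\ell}(1+o(1))$, this is $o(q^{1.5-\ell})$, yielding $R_{q,2\ell+1} = \frac{1}{q^2}(1 + o(q^{1.5-\ell}))$.

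There is no real obstacle here: every ingredient has been proved in the preceding sections, and the argument is purely a matter of substitution and bookkeeping with the $o(\cdot)$ terms. The one point demanding a modicum of care is the manipulation of the little-oh estimates past the binomial factors $(q+1)^{\pm k}$ — one must confirm that replacing $(q+1)^{k}$ by $q^{k}$ only introduces a multiplicative $(1+o(1))$ and does not disturb the stated shape of the error, which is routine since $k$ is a fixed constant while $q \to \infty$. I would state the theorem's proof as essentially this three-line computation, citing Proposition~\ref{pro:closedwalks} and Corollary~\ref{cor: KloosterMoments} (the latter confirming consistency for $\ell = 1, 2$).
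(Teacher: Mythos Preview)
Your proposal is correct and follows essentially the same route as the paper: vertex-transitivity plus the Bayesian averaging gives $R_{q,\ell,t}=\frac{1}{q^2|S_t|^\ell}\mathrm{Trace}(\mathbb{A}_t^\ell)$, then substituting $|S_t|=q+1$ and the trace formula from Proposition~\ref{pro:closedwalks} yields the exact expression, after which the equidistribution asymptotics for $M_{q,\ell}$ are plugged in and simplified. The paper's argument is exactly this substitution-and-bookkeeping computation, so there is nothing to add.
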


\section{Intersection Numbers and Matrices of the scheme}
\label{section: intersectionnumbers}

In this section we calculate the intersection numbers $p_{i,j}^k$ of the $d$-dimensional Euclidean Association scheme. Recall $p_{i,j}^k=p_{j,i}^k$ is the number of ways a pair of points $\{x,y\}$ in $\mathbb{F}_q^d$ with $d(x,y)=k$ can be completed to a triangle $\{x,y,z\}$ of side lengths $i,j,k$. Note $p_{i,j}^{\bar{0}} =\delta_{i,j} |S_i|$ and 
$p_{\bar{0},j}^k =\delta_{j,k}$ so we may assume $i,j,k \in \mathbb{F}_q$ and so are not $\bar{0}$.

We will concentrate on the planar, $d=2$ case as the higher dimensional cases were reduced to this case in \cite{Kwok}. This case was partially computed in \cite{TP} in a different context.

\begin{theorem}[Intersection numbers of planar Euclidean association scheme]
\label{thm:intersection numbers}
Let $q$ be an odd prime power and $p_{i,j}^k$ be the intersection numbers of the planar ($d=2$) Euclidean association scheme for $i,j,k \in \mathbb{F}_q$ not equal to $\bar{0}$.

If $q=1 \text{ mod } 4$ we have $p_{i,j}^0=1+(q-2)\delta_{i,j}\delta_{i,0}-\delta_{i,j}$.

Otherwise $k \neq 0$ and we have
$$
p_{i,j}^k=\binom{4\sigma_2-\sigma_1^2}{q}+1
$$
where $\binom{x}{q}$ is the usual Legendre symbol, $\sigma_1=i+j+k, \sigma_2=ij+jk+ki$.

Furthermore when $i,j,k \in \mathbb{F}_q$ are the distances in a triple of points of $\mathbb{F}_q^2$, we have $4\sigma_2 = \sigma_1^2$ if and only if $\{i,j,k\}$ are the distances of a collinear triple of points (a triple of points that lies on an affine line).
\end{theorem}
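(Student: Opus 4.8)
The plan is to reduce the computation of $p_{i,j}^k$ to counting lattice points on a conic and then invoke the Weil/Gauss estimates already available from our formula for the $\mathbb{P}$-matrix, or alternatively to count directly. Using Witt's theorem (as in the Euclidean association scheme example), we may fix the segment: since $d(x,y)=k\neq 0$ (in the $q=3\bmod 4$ case) and all such segments are equivalent under the isometry group, place $x=(0,0)$ and $y=(y_1,0)$ with $y_1^2=k$ (which requires $k$ a nonzero square, but the count is isometry-invariant so we may instead take $y$ on any sphere of radius $k$; the cleanest bookkeeping is to keep $y$ general and use the substitution below). Then $p_{i,j}^k$ counts $z=(z_1,z_2)$ with $Q(z)=i$ and $Q(z-y)=j$. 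Subtracting the two equations $z_1^2+z_2^2=i$ and $(z_1-y_1)^2+z_2^2=j$ gives a linear equation $2y_1 z_1 = i - j + k$, which pins $z_1$ uniquely (as $y_1\neq 0$); substituting back into $z_1^2+z_2^2=i$ yields a single quadratic equation $z_2^2 = (\text{explicit quadratic in } i,j,k)/(4k)$. The number of solutions $z_2$ is then $1+\binom{\text{discriminant}}{q}$ when the discriminant is nonzero, and $1$ when it vanishes; in all cases it equals $\binom{\Delta}{q}+1$ provided we set $\binom{0}{q}=0$.

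The core computational step is to identify that discriminant with $4\sigma_2-\sigma_1^2$ where $\sigma_1=i+j+k$, $\sigma_2=ij+jk+ki$. Carrying out the elimination: from $z_1 = (i-j+k)/(2y_1)$ and $y_1^2=k$ we get $z_1^2 = (i-j+k)^2/(4k)$, so $z_2^2 = i - (i-j+k)^2/(4k) = \bigl(4ik - (i-j+k)^2\bigr)/(4k)$. One then checks by direct expansion that $4ik-(i-j+k)^2 = -(i^2+j^2+k^2) + 2(ij+jk+ki) = 4\sigma_2 - \sigma_1^2$ — this is the Heron-type identity $16\cdot(\text{area})^2 = 4\sigma_2-\sigma_1^2$ in disguise, and it is the one genuinely computational lemma. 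Since the number of solutions for $z_2$ is $1+\binom{z_2^2\text{-value}}{q}$ and $\binom{(4\sigma_2-\sigma_1^2)/(4k)}{q} = \binom{4\sigma_2-\sigma_1^2}{q}\binom{4k}{q} = \binom{4\sigma_2-\sigma_1^2}{q}$ because $4k$ is... not necessarily a square — so here one must be a little careful: the bijection between $z$-solutions with $d(x,y)=k$ fixed and solutions of $z_2^2 = (4\sigma_2-\sigma_1^2)/(4k)$ only requires $y$ to lie on the $k$-sphere, and one should either restrict to $k$ a square (then $\binom{4k}{q}=1$) or, better, observe that the count $p_{i,j}^k$ is symmetric enough that scaling the whole configuration by a unit $\lambda$ sends $(i,j,k)\mapsto(\lambda^2 i,\lambda^2 j,\lambda^2 k)$ and multiplies $4\sigma_2-\sigma_1^2$ by $\lambda^4$, leaving both sides invariant; this lets us assume WLOG that $k$ is a square. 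I expect this normalization subtlety to be the main place the argument needs care.

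For the final clause, I would argue that three points $x,y,z$ in $\mathbb{F}_q^2$ with pairwise squared-distances $i,j,k$ are collinear precisely when the configuration is degenerate, and by the calculation above the count $p_{i,j}^k$ realizing such a triple is $1+\binom{4\sigma_2-\sigma_1^2}{q}$; a triple of points is collinear iff $z$ lies on the line through $x,y$, which in our normalized coordinates means $z_2 = 0$, i.e. $z_2^2 = 0$, i.e. $(4\sigma_2-\sigma_1^2)/(4k)=0$, i.e. $4\sigma_2=\sigma_1^2$. Conversely if $4\sigma_2=\sigma_1^2$ then the unique $z_2$ is $0$ so any triangle with these side lengths is forced to be collinear. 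One subtlety: we must know the triple $(i,j,k)$ is actually realized by some triple of points before speaking of "the distances of a collinear triple"; this is the hypothesis "when $i,j,k$ are the distances in a triple of points", so realizability is given and the equivalence follows. For the $q=1\bmod 4$ case with $k=0$, the linear relation $2y_1z_1 = i-j$ degenerates differently (one must split on whether $y$ lies on the light cone), which is exactly why the stated formula $p_{i,j}^0 = 1+(q-2)\delta_{i,j}\delta_{i,0}-\delta_{i,j}$ has a different shape; I would handle that case by a separate short direct count, distinguishing $y$ null vs. non-null and $z$ on the relevant spheres.
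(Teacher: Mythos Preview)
Your approach is essentially the paper's --- reduce to a single quadratic and identify its discriminant with $4\sigma_2-\sigma_1^2$ --- but your normalization creates a genuine gap that the paper's choice avoids.

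You place $y=(y_1,0)$ with $y_1^2=k$, which forces $k$ to be a square, and then propose to repair this by the scaling $(i,j,k)\mapsto(\lambda^2 i,\lambda^2 j,\lambda^2 k)$. That scaling cannot change the square class of $k$, so it does \emph{not} let you ``WLOG assume $k$ is a square''; the all-nonsquare case $(i,j,k)$ is simply not covered by your argument. The paper sidesteps this entirely by keeping $y=(u_1,v_1)$ general on $S_k$ (assuming only $u_1\neq 0$), parametrizing the perpendicular line as $(u_2,v_2)=\bigl(\tfrac{k+i-j}{2u_1},0\bigr)+s(v_1,-u_1)$, and substituting into $u_2^2+v_2^2=i$. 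This yields the quadratic $ks^2+(k+i-j)\tfrac{v_1}{u_1}s+\bigl((\tfrac{k+i-j}{2u_1})^2-i\bigr)=0$, whose discriminant, after using $v_1^2=k-u_1^2$, is exactly $4ki-(k+i-j)^2=4\sigma_2-\sigma_1^2$ --- with no spurious $\binom{k}{q}$ factor and no square-class restriction. That parametrization is the fix you were looking for.

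Your treatment of the remaining pieces is fine. The collinearity criterion via $z_2=0$ is correct and is equivalent to the paper's formulation $\det(B^TB)=0$ with $B=\begin{bmatrix}u_1&u_2\\v_1&v_2\end{bmatrix}$. For $k=0$ (only relevant when $q\equiv 1\bmod 4$) the paper does exactly what you propose: the quadratic in $s$ degenerates to a linear equation since the leading coefficient is $k$, giving $p_{i,j}^0=1$ for $i\neq j$, while for $i=j$ the equation is vacuous unless $i=0$, in which case the whole isotropic line works minus the two points $x,y$, giving $p_{0,0}^0=q-2$ and $p_{i,i}^0=0$ for $i\neq 0$.
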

\begin{proof}
WLOG we may take $x=(0,0)$ and Witt's theorem guarantees $p_{i,j}^k$ will be independent of $y=(u_1,v_1) \neq (0,0)$ but only depend on its length $u_1^2+v_1^2=k$. We may furthermore assume $u_1 \neq 0$. 

Then by definiton, any $z=(u_2,v_2)$ making $\{x,y,z\}$ a $i-j-k$ triangle must satisfy $u_2^2+v_2^2=i, (u_2-u_1)^2+(v_2-v_1)^2=j$. Given $u_1, v_1, i, j, k$, we need to count the number, $p_{i,j}^k$, of $z=(u_2,v_2)$ that solve these equations. Plugging the first two equations into the last one, we get 
$u_1u_2+v_1v_2 = \frac{k+i-j}{2}$. The set of $(u_2,v_2)$ solving this equation is an affine line (not necessarily thru the origin) perpendicular to the line thru $y=(u_1,v_1)$. It is not hard to see that in fact $(u_2,v_2)=(\frac{k+i-j}{2u_1},0) + s(v_1,-u_1)$ for some $s \in \mathbb{F}_q$. However we still need $u_2^2+v_2^2=i$ which yields 
$(\frac{k+i-j}{2u_1}+sv_1)^2 + s^2u_1^2=i$. This yields the equation 
$ks^2 +(k+i-j)\frac{v_1}{u_1}s +((\frac{k+i-j}{2u_1})^2 -i)=0$ which is a quadratic equation unless $k=0$.

When $k=0$, $v_1^2=-u_1^2 \neq 0$ so the resultant linear equation has a unique solution for $s$ whenever $j \neq i$ and $p_{i,j}^0=1$ for all $i \neq j \in \mathbb{F}_q$. 
When $k=0$ and $i=j$, the equation cannot hold unless $i=0$ also in which case any $s$ works as long as $x \neq y$ and $x \neq z$ so there are $(q-2)$ such $s$ (In this case as $k=0$ the original line thru $y=(u_1,v_1)$ is its own perpendicular and so one must avoid the two choices of $s$ where $z$ coincides with $x=(0,0)$ or $y$). Thus $p_{i,i}^0=(q-2)\delta_{i,0}$. Note this $k=0$ case can occur only when $q=1 \text{ mod } 4$.

Otherwise $k \neq 0$ and we get a quadratic equation for $s$ whose discriminant can be calculated to be $ (k+i-j)^2(\frac{v_1}{u_1})^2-4k((\frac{k+i-j}{2u_1})^2 -i)=\frac{1}{u_1^2}((k+i-j)^2v_1^2 -k(k+i-j)^2+4kiu_1^2)$. Using $v_1^2=k-u_1^2$ this becomes 
$4ki-(k+i-j)^2=2(ki+ij+jk)-(k^2+i^2+j^2)=4\sigma_2-\sigma_1^2$.

Thus there are $\binom{4\sigma_2-\sigma_1^2}{q}+1$ solutions for $s$ where $\binom{x}{q}$ is the usual Legendre symbol.

For the last statement, let 
$B=\begin{bmatrix} u_1 & u_2 \\ v_1 & v_2 \end{bmatrix}$ and note that $B^TB=\begin{bmatrix} k & \frac{k+i-j}{2} \\ \frac{k+i-j}{2} & i \end{bmatrix}$ by the calculations above. The triple of points $\{ (0,0), (u_1,v_1), (u_2,v_2) \}$ is 
collinear if and only if $rank(B) < 2$ if and only if $det(B)=0$ if and only if 
$det( \begin{bmatrix} k & \frac{k+i-j}{2} \\ \frac{k+i-j}{2} & i \end{bmatrix})=\frac{4\sigma_2-\sigma_1^2}{4}=0$.
\end{proof}

We record the following corollary, which exploits the odd behavior of isotropic lines when they exist:

\begin{corollary}
Let $q$ be an odd prime power with $q=1 \text{ mod } 4$. Let 
$E \subseteq \mathbb{F}_q^2$ have $|E| > q$ and $\Delta'(E)$ be the nonzero distances achieved by $E$. Then $|E| \leq q + |\Delta'(E)|(|\Delta'(E)|-1)$.
\end{corollary}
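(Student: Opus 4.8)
The plan is to discard one full isotropic line from $E$ and injectively encode everything left over by an ordered pair of \emph{distinct} induced distances, the injectivity coming exactly from the ``odd'' intersection number $p_{i,j}^{0}=1$ (for $i\neq j$) of Theorem~\ref{thm:intersection numbers}. Throughout, $\Delta'(E)$ denotes the set of all distances $\neq \bar 0$ realized by pairs of (distinct) points of $E$; this is a subset of $\mathbb{F}_q$ which in general does contain $0$.

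First I would use that, since $q\equiv 1\bmod 4$, each of the two isotropic directions gives a partition of $\mathbb{F}_q^{2}$ into $q$ parallel isotropic lines. Because $|E|>q$, pigeonhole forces one such line $I$ to meet $E$ in at least two points; fix $x\neq y$ in $E\cap I$, so that $d(x,y)=0$, and note $|E\cap I|\le q$.

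The heart of the argument is the claim that $z\mapsto (d(x,z),d(z,y))$ is injective on $E\setminus I$ with image contained in the set of ordered pairs of distinct elements of $\Delta'(E)$. For the ``distinct'' part, write $y-x=su$ with $u$ a nonzero isotropic vector ($Q(u)=0$) and $s\neq 0$; completing the square gives $d(y,z)-d(x,z)=Q(z-y)-Q(z-x)=-2s\langle z-x,u\rangle$, and since the orthogonal complement of a nonzero isotropic vector in a nondegenerate plane equals its own span, $\langle z-x,u\rangle=0$ precisely when $z\in x+\mathrm{span}(u)=I$. Hence for $z\in E\setminus I$ the distances $d(x,z)$ and $d(z,y)$ are distinct; they are also $\neq\bar 0$ (as $z\neq x,y$) and realized in $E$, so both lie in $\Delta'(E)$. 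For the injectivity, Theorem~\ref{thm:intersection numbers} (in the $q\equiv 1\bmod 4$ case, with the segment length $k=d(x,y)=0$) gives $p_{i,j}^{0}=1$ whenever $i\neq j$: exactly one point $z$ of the entire plane satisfies $d(x,z)=i$ and $d(z,y)=j$. Combining these, $|E\setminus I|\le |\Delta'(E)|\,(|\Delta'(E)|-1)$, and adding $|E\cap I|\le q$ yields the stated inequality.

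I do not expect a genuine obstacle here: the argument is pigeonhole, one bilinear identity, and a direct appeal to an already-computed intersection number. The one point that must be handled with care — and the one that really ``exploits the odd behaviour of isotropic lines'' — is that $p_{i,j}^{0}=1$ rather than the generic value $0$ or $2$, i.e.\ two distinct circles centred at points of a common isotropic line meet in a single point. The purpose of checking $z\notin I\Rightarrow d(x,z)\neq d(z,y)$ is precisely to guarantee that we always stay in the regime $i\neq j$ where this holds, so that no point of $E\setminus I$ falls outside the count.
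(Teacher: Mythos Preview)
Your proof is correct and follows essentially the same route as the paper's: find two points $x,y\in E$ at distance $0$, remove the isotropic line $I$ through them, and inject $E\setminus I$ into ordered pairs of distinct elements of $\Delta'(E)$ using $p_{i,j}^{0}=1$ for $i\neq j$. The only substantive difference is in how you verify that $d(x,z)\neq d(z,y)$ for $z\notin I$: you use the bilinear identity $Q(z-y)-Q(z-x)=-2s\langle z-x,u\rangle$ together with $u^{\perp}=\mathrm{span}(u)$, whereas the paper instead invokes $p_{i,i}^{0}=0$ for $i\neq 0$ and the collinearity criterion $4\sigma_2=\sigma_1^2$ from Theorem~\ref{thm:intersection numbers} to rule out the equal-distance case.
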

\begin{proof}
As $|E| > q$, there exist two distinct points $x, y \in E$ with $d(x,y)=0$. Let $E'=E \backslash L$ where $L$ is the line through $x$ and $y$. This line $L$ is isotropic in the sense that distances between distinct points on the line are always zero. Note that $|E'| \geq |E|-q$. 

By Theorem~\ref{thm:intersection numbers}, $p_{i,j}^0=1$ for all $i,j \in \mathbb{F}_q^{\times}$ with $i \neq j$ and $p_{i,i}^0=0$ unless $i=0$. Furthermore, by the collinearity condition $4\sigma_2-\sigma_1^2=0$, it is easy to check that a point $z \neq x, y$ is collinear with $x$ and $y$ if and only if $d(z,x)=d(z,y)=0$ also.

It follows that each point $z$ off the isotropic line $L$ thru $x$ and $y$ is uniquely determined in the plane by the pair of unequal, nonzero distances $i=d(z,x)$ and $j=d(z,y)$. Thus $|E|-q \leq |E'| \leq |\Delta'(E)|(|\Delta'(E)|-1)$ from which the corollary follows.
\end{proof}

We now recall the definition of the intersection matrices of an association scheme.

\begin{definition}[Intersection Matrices]
Let $(V,d)$ be an association scheme with distance set $\Delta$ and intersection numbers $p_{i,j}^k$. We define $|\Delta|$ many $|\Delta| \times |\Delta|$ intersection matrices of the scheme $L_i, i \in \Delta$ via $$(L_i)_{k,j} = p_{i,j}^k.$$

In general, these satisfy $L_{\bar{0}}=\mathbb{I}$ and $L_i L_j = \sum_{k \in \Delta} p_{i,j}^k L_k$. Thus the map $$\mathbb{R}[A_i, i \in \Delta] \to \mathbb{R}[L_i, i \in \Delta]$$ is an algebra epimorphism from the Bose-Messner algebra to the real algebra generated by these intersection matrices. Note the dimensions $|V| \times |V|$ of the $A_i$ matrices are in general quite different than the size of the $L_i$ matrices.

The eigenvalues of the $L_i$ matrix are the same as that of the $A_i$ matrix but with different multiplicities. In fact the columns of the scheme's $Q$ matrix  are the corresponding simultaneous (right) eigenvectors of the $L_i$ while the rows of the $P$ matrix are the (left) eigenvectors of the $L_i$. These facts follow from the identity 
$$PL_jP^{-1}=diag(P_{\bar{0},j},\dots, P_{d,j}).$$ (See section 11.2 of \cite{BH}.)
\end{definition}                                                                                                                                                                                                                                                                                                                                                                           

We now record the intersection matrices for the planar Euclidean association scheme:

\begin{theorem}
Let $q$ be an odd prime power and consider the planar Euclidean association scheme on $\mathbb{F}_q^2$. Note $L_{\bar{0}}$ is always an identity matrix. Recall when $q=1 \text{ mod } 4$, we have $\bar{0} \neq 0 \in \Delta$ also.

When $i,j \in \mathbb{F}_q^{\times}$, we have 
$(L_i)_{\bar{0},j}=p_{i,j}^{\bar{0}}=\delta_{i,j}|S_i|=\delta_{i,j}(q-\epsilon_q^2)$ and 
$(L_i)_{0,j}=p_{i,j}^{0}=1-\delta_{i,j}$.

We have $(L_i)_{j,\bar{0}}=p_{i,\bar{0}}^j=\delta_{i,j}$ and $(L_i)_{\bar{0},\bar{0}}=p^{\bar{0}}_{i,\bar{0}}=0$. 

When $i,j \in \mathbb{F}_q$,$k \in \mathbb{F}_q^{\times}$ we have 
$$
(L_i)_{k,j}=p^k_{i,j}=\binom{4\sigma_2 - \sigma_1^2}{q} + 1 \in \{0,1,2\}
$$
where $\sigma_2=ij+jk+ki, \sigma_1=i+j+k$ and $\binom{x}{q}$ is the Legendre symbol.

When $q=3 \text{ mod } 4$, the spectrum of $L_i$ is $\{\{ (q+1)^{(1)}, -K_q(1,a)^{(1)}, a \in \mathbb{F}_q^{\times} \}\}$ for any $i \in \mathbb{F}_q^{\times}$. Thus $Trace(L_i^{\ell})=(q+1)^{\ell} + (-1)^{\ell}M_{q,\ell}$.

When $q=1 \text{ mod } 4$, the spectrum of $L_i$ is $\{\{ (q-1)^{(1)}, -1^{(1)}, K_q(1,a)^{(1)}, a \in \mathbb{F}_q^{\times} \}\}$ for any $i \in \mathbb{F}_q^{\times}$. Thus $Trace(L_i^{\ell})=(q-1)^{\ell} + (-1)^{\ell} + M_{q,\ell}$.

\end{theorem}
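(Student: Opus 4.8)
The plan is to derive this theorem by assembling results already in hand; no new idea is required. The three inputs are: (i) the generic association-scheme identities $L_{\bar 0}=\mathbb{I}$, $p_{i,j}^{\bar 0}=\delta_{i,j}|S_i|$, and $p_{\bar 0,j}^{k}=\delta_{j,k}$; (ii) the explicit planar intersection numbers from Theorem~\ref{thm:intersection numbers} together with the planar sphere sizes $|S_i|=q-\epsilon_q^{2}$ for $i\in\mathbb{F}_q^{\times}$ recorded in the earlier discussion of the $P$-matrix (here $\epsilon_q^{2}=-1$ when $q\equiv 3\bmod 4$ and $\epsilon_q^{2}=1$ when $q\equiv 1\bmod 4$, so this reads $q+1$ resp.\ $q-1$); and (iii) the diagonalization $PL_iP^{-1}=\mathrm{diag}(P_{a,i})_{a\in\Delta}$ cited from \cite{BH}, valid since $PQ=q^{2}\mathbb{I}$ makes $P$ invertible, applied to the planar $P$-matrix.

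For the matrix entries, the rows and columns indexed by $\bar 0$ are immediate from (i): since $i\in\mathbb{F}_q^{\times}$ so $\delta_{i,\bar 0}=0$, we get $(L_i)_{\bar 0,\bar 0}=p_{i,\bar 0}^{\bar 0}=0$, $(L_i)_{\bar 0,j}=p_{i,j}^{\bar 0}=\delta_{i,j}|S_i|=\delta_{i,j}(q-\epsilon_q^{2})$, and $(L_i)_{j,\bar 0}=p_{i,\bar 0}^{j}=p_{\bar 0,i}^{j}=\delta_{i,j}$. When $q\equiv 1\bmod 4$ the distance $0$ occurs, and for the $k=0$ row I would invoke the first case of Theorem~\ref{thm:intersection numbers}, $p_{i,j}^{0}=1+(q-2)\delta_{i,j}\delta_{i,0}-\delta_{i,j}$, which for $i,j\in\mathbb{F}_q^{\times}$ (hence $\delta_{i,0}=0$) collapses to $1-\delta_{i,j}$. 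For every row $k\in\mathbb{F}_q^{\times}$, the second case of Theorem~\ref{thm:intersection numbers} gives $p_{i,j}^{k}=\binom{4\sigma_2-\sigma_1^{2}}{q}+1$ verbatim, and this lies in $\{0,1,2\}$ because the Legendre symbol takes values in $\{-1,0,1\}$.

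For the spectrum I would use (iii): the eigenvalues of $L_i$, counted with multiplicity, are exactly the entries of the $i$-th column of the planar $P$-matrix. Reading that column off the explicit $P$-matrices computed earlier: when $q\equiv 3\bmod 4$ it consists of $q+1$ (the $\bar 0$-entry, a sphere size) together with $-K_q(1,ia/4)$ for $a\in\mathbb{F}_q^{\times}$ (the off-diagonal block of $P$ being $-K$ with $K_{a,j}=K_q(1,aj/4)$); when $q\equiv 1\bmod 4$ it consists of $q-1$ and $-1$ (the $\bar 0$- and $0$-entries) together with $K_q(1,ia/4)$ for $a\in\mathbb{F}_q^{\times}$. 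Since $i\neq 0$, the map $a\mapsto ia/4$ permutes $\mathbb{F}_q^{\times}$, so these multisets are precisely $\{\{(q+1)^{(1)},-K_q(1,a)^{(1)}\}\}$ and $\{\{(q-1)^{(1)},(-1)^{(1)},K_q(1,a)^{(1)}\}\}$. The trace formulas then follow by summing $\ell$-th powers of the eigenvalues, pulling $(-1)^{\ell}$ out of the Kloosterman terms, and recognizing $\sum_{a\in\mathbb{F}_q^{\times}}K_q(1,a)^{\ell}=M_{q,\ell}$ as in Proposition~\ref{pro:closedwalks}.

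The only points that need any care are the reindexing $a\mapsto ia/4$ and verifying that the quoted identity genuinely yields the eigenvalues with their correct multiplicities (i.e.\ that the relevant column of $P$ is literally the diagonal of $PL_iP^{-1}$); everything else is substitution into Theorem~\ref{thm:intersection numbers}, the planar $P$-matrix, and Proposition~\ref{pro:closedwalks}. I therefore expect no substantive obstacle — this theorem is a consolidation of earlier results for convenient reference.
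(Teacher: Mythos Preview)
Your proposal is correct and matches the paper's approach exactly: the paper states this theorem without a separate proof block, presenting it as a record of the intersection matrices assembled from Theorem~\ref{thm:intersection numbers}, the generic scheme identities of Section~2, and the diagonalization $PL_jP^{-1}=\mathrm{diag}(P_{\bar 0,j},\dots,P_{d,j})$ quoted in the Intersection Matrices definition. You have correctly identified all the ingredients and how they fit together, including the reindexing $a\mapsto ia/4$ needed to recognize the full Kloosterman multiset.
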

              
Note that the entries of $L_i$ keep track of which triangles exist in the plane $\mathbb{F}_q^2$ and through the last theorem, implicitly determine the Kloosterman moments $M_{q,\ell}$ for all $\ell$. As it is well known that a finite multiset of complex numbers is determined by all its moments, this means the Kloosterman sums as a set are determined by exactly the data of which triangles exist in the plane $\mathbb{F}_q^2$.

\end{document}